\DeclareMathOperator{\Ran}{Ran}
\DeclareMathOperator{\Tr}{Tr}
\DeclareMathOperator{\ess}{ess}
\DeclareMathOperator{\const}{const}
\DeclareMathOperator{\Span}{span}
\DeclareMathOperator{\Symb}{Sym}
\newcommand{\abs}[1]{{\lvert#1\rvert}}
\newcommand{\Abs}[1]{\left\lvert#1\right\rvert}
\newcommand{\norm}[1]{\lVert#1\rVert}
\newcommand{\jap}[1]{\langle#1\rangle}
\newcommand{\bbT}{{\mathbb T}}
\newcommand{\bbR}{{\mathbb R}}
\newcommand{\bbC}{{\mathbb C}}
\newcommand{\bbN}{{\mathbb N}}
\newcommand{\bbQ}{{\mathbb Q}}
\newcommand{\bbZ}{{\mathbb Z}}
\newcommand{\bm}{\mathbf m}
\newcommand{\be}{\mathbf e}
\newcommand{\bc}{\mathbf c}
\newcommand{\bL}{\mathbf L}
\newcommand{\bT}{\mathbf T}
\newcommand{\Sch}{\mathbf S}
\numberwithin{equation}{section}
\renewcommand{\[}{\begin{equation}}
\renewcommand{\]}{\end{equation}}
\theoremstyle{plain}
\newtheorem{theorem}{\bf Theorem}[section]
\newtheorem*{theorem*}{Theorem 1.1$'$}
\newtheorem{lemma}[theorem]{\bf Lemma}
\newtheorem{proposition}[theorem]{\bf Proposition}
\newtheorem{corollary}[theorem]{\bf Corollary}
\theoremstyle{definition}
\newtheorem*{definition*}{\bf Definition}
\theoremstyle{remark}
\newtheorem*{remark*}{\bf Remark}
\newtheorem{example}[theorem]{\bf Example}
\DeclareFontFamily{U}{mathx}{\hyphenchar\font45}
\DeclareFontShape{U}{mathx}{m}{n}{<5> <6> <7> <8> <9> <10>
<10.95> <12> <14.4> <17.28> <20.74> <24.88> mathx10}{}
\DeclareSymbolFont{mathx}{U}{mathx}{m}{n}
\DeclareMathAccent{\wc}{0}{mathx}{"71}
\newcommand{\wt}{\widetilde}
\newcommand{\wh}{\widehat}
\newcommand{\eps}{\varepsilon}
\newcommand{\symb}{{\varphi}}
\newcommand{\bsymb}{{\pmb{\varphi}}}
\newcommand{\bmu}{{\pmb{\mu}}}
\begin{document}

\title[Szeg\H{o} Theorem for multiplicative Toeplitz operators]{Szeg\H{o}-type limit theorems for ``multiplicative Toeplitz'' operators and non-F{\o}lner approximations}

\author{Nikolai Nikolski}
\address{Institut de Math\'ematiques de Bordeaux, Universit\'e de Bordeaux, Talence, France, 
and Chebyshev Laboratory, St.Petersburg University}
\email{nikolski@math.u-bordeaux.fr}

\author{Alexander Pushnitski}
\address{Department of Mathematics, King's College London, Strand, London, WC2R~2LS, U.K.}
\email{alexander.pushnitski@kcl.ac.uk}

\subjclass[2010]{47B35}

\keywords{Toeplitz operators, Szeg\H{o} theorem, multiplicative Toeplitz operators, F{\o}lner sequence}

\begin{abstract}
We discuss an analogue of the First Szeg\H{o} Limit Theorem
for multiplicative Toeplitz operators
and highlight the role of the 
multliplicative F{\o}lner condition in this topic. 
\end{abstract}

\date{23 December 2019}

\maketitle

%%%%%%%%%%%%%%%%%%%%%%%%%%%%%%%%%
\section{Introduction}
%%%%%%%%%%%%%%%%%%%%%%%%%%%%%%%%%
The classical \emph{Toeplitz operators}  
are defined as infinite matrices of the form 
\[
T=\{ c(j-k)\}_{j,k=0}^\infty
\quad 
\text{ on $\ell^2(\bbZ_+)$}, \quad \bbZ_+=\{0,1,2,\dots\},
\label{01}
\]
where  $c$ is a function on the group of integers $\bbZ$. 
It is known that $T$ is bounded on $\ell^2(\bbZ_+)$ if and only if $c$ is the sequence of Fourier coefficients
of a bounded function $\symb\in L^\infty(\bbT)$ (the \emph{symbol} of $T$) on the unit circle
$\bbT$: 
$$
c(k)=\wh \symb(k), \quad k\in\bbZ.
$$ 
This fact is due to O.~Toeplitz, 1911; elementary properties of 
Toeplitz operators can be found, for example, in \cite{Nik2017}.  We will write $T=T(\symb)$. 
Mapping the standard basis in $\ell^2(\bbZ_+)$ onto another orthonormal basis
(in another Hilbert space), one obtains unitarily equivalent realisations of Toeplitz operators.
For example, the Wiener-Hopf integral operators on $L^2(\bbR_+)$ have Toeplitz matrices
with respect to the basis of Laguerre functions. 

The First Szeg\H{o} Limit Theorem (see Theorem~\ref{thm.a1} below) relates the asymptotic 
spectral density of the $N\times N$ truncated Toeplitz matrices
\[
T_N(\symb)=\{\wh \symb(j-k)\}_{j,k=0}^{N-1}
\label{02}
\]
as $N\to\infty$ to the values of the symbol $\symb$.

The subject of this paper is
``\emph{multiplicative Toeplitz operators}''; we use this term to refer to infinite matrices of the form 
$$
\bT=\{\bc(j/k)\}_{j,k=1}^\infty
\quad \text{ on  $\ell^2(\bbN)$,} \quad \bbN=\{1,2,\dots\}.
$$
Here $\bc: \bbQ_+\to\bbC$ is a complex valued function on the set $\bbQ_+$ of positive rationals. 
As we shall see, in full analogy with the classical Toeplitz operators, $\bT$ is bounded 
if and only if $\bc$ is the Fourier transform $\bc=\wh\bsymb$ of a function $\bsymb$ (=symbol)
on the infinite multi-torus $\bbT^\infty$; we will write $\bT=\bT(\bsymb)$. 
We will use boldface font for objects related to the multiplicative case. 

Our aim here is to discuss an analogue of the First Szeg\H{o} Limit Theorem
for multiplicative Toeplitz operators, 
relating the spectral asymptotics of large truncated multiplicative Toeplitz 
matrices to the values of the symbol $\bsymb$.  
In fact, such analogue is a particular case of  \cite[Theorem 11]{Bedos}, which deals with matrices 
constructed from Fourier coefficients of functions on a compact Abelian group $G$.
However, the case $G=\bbT^\infty$ was not explicitly discussed in \cite{Bedos}; 
here we would like to focus on some interesting concrete aspects 
appearing in this case, which are due to the interplay between the multiplicative structure
and the natural order on $\bbN$.

The main new aspect appearing here is the choice of the truncation. 
It turns out that the ``natural" truncation 
$$
\{\wh\bsymb(j/k)\}_{j,k=1}^N
$$
is not admissible, i.e. it does not lead to the ``expected" asymptotic spectral density!
One must instead consider truncations 
$$
\bT_{\sigma_N}(\bsymb)=\{\wh\bsymb(j/k)\}_{j,k\in \sigma_N}
$$
to a sequence of finite subsets of $\sigma_N\subset \bbN$, satisfying the so-called
multiplicative F{\o}lner condition, see \eqref{a5a} below.

The F{\o}lner approximation techniques are well developed in operator theory and in the theory of $C^*$-algebras,
and we refer to \cite{HRS2000} and \cite{ALY2013} for background  and references. 
The question of what kind of asymptotic spectral densities can appear in the non-F{\o}lner 
case is still somewhat murky. One of our aims is to present a variety of examples of 
non-F{\o}lner sequences, both in the additive and multiplicative cases. 
The general conclusion we derive from these examples is that there are no unconditional
spectral limits for finite sections of Toeplitz operators (either additive or multiplicative), apart from trivial cases.

In Section~\ref{sec.a} we recall the classical First Szeg\H{o}  Limit theorem, 
as well as basic facts on F{\o}lner spectral approximations. 
We also provide a few examples of non-F{\o}lner spectral approximations. 
In Section~\ref{sec.aa} we discuss the multiplicative analogues and connections
with other topics. In particular, we state two open problems in Section~\ref{sec.op}. 

For completeness, we give some proofs in the Appendix
although they follow known ideas and deviate little from the construction of \cite{Bedos}. 

\textbf{Notation.}
Throughout the paper, we use $\Sch_1$, $\Sch_2$ to denote the trace class and the 
Hilbert-Schmidt class of compact operators on a given Hilbert space, with the norms
$\norm{\cdot}_{\Sch_1}$ and $\norm{\cdot}_{\Sch_2}$. The operator norm is 
denoted by $\norm{\cdot}$. Notation $\#X$ stands for the number of elements of a finite set $X$.

%%%%%%%%%%%%%%%%%%%%%%%%%%%%%%%%%%%%%%%%%
%%%%%%%%%%%%%%%%%%%%%%%%%%%%%%%%%%%%%%%%%
\section{Asymptotic spectral distributions for Toeplitz operators}\label{sec.a}
%%%%%%%%%%%%%%%%%%%%%%%%%%%%%%%%%%%%%%%%%
%%%%%%%%%%%%%%%%%%%%%%%%%%%%%%%%%%%%%%%%%

%%%%%%%%%%%%%%%%%%%%%%%%%%%%%%%%%
\subsection{The First Szeg\H{o} Limit Theorem}
%%%%%%%%%%%%%%%%%%%%%%%%%%%%%%%%%
Let $\bbT$ be the unit circle in the complex plane, and let $m$ be the standard 
Lebesgue measure on $\bbT$ with $m(\bbT)=1$. 
For $\varphi\in L^2(\bbT)$, let 
$$
\wh\symb(k)=\int_\bbT\symb(z)\overline{z^k} dm(z), 
\quad k\in\bbZ,
$$
be the Fourier coefficients of $\symb$. 
Let, as above,  
$$
T(\symb)=\{\wh\symb(j-k)\}_{j,k=0}^\infty \quad \text{ in $\ell^2(\bbZ_+)$}
$$
and let  $T_N(\symb)$ be as in  \eqref{02}. 
We have, denoting $e_k(z)=z^k$, 
$$
\wh \symb(j-k)=\jap{\symb\cdot e_j,e_k}_{L^2(\bbT)},
$$
and so the operator  $T(\symb)$ is bounded on $\ell^2(\bbZ_+)$ iff $\symb\in L^\infty(\bbT)$.
Let us also assume that $\symb$ is real-valued; 
then $T(\symb)$ is self-adjoint.

We quote the First Szeg\H{o} Limit Theorem as follows:
%%%%%%%%%%%%%%%%%%%%%%
\begin{theorem}\cite{Szego}\label{thm.a1}
%%%%%%%%%%%%%%%%%%%%%%
Let $\symb\in L^\infty(\bbT)$ be a real-valued function,
and let $f$ be a function, continuous on the closed interval $[\ess\inf\symb,\ess\inf\symb]$. 
Then 
\[
\lim_{N\to\infty}\frac1N\Tr f(T_N(\symb))
=
\int_\bbT f(\symb(z))dm(z). 
\label{a2}
\]
\end{theorem}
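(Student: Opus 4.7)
The plan is to combine Weierstrass approximation of $f$ with a ``banded matrix'' argument for trigonometric polynomials and a Hilbert--Schmidt approximation argument in $L^2(\bbT)$. Since $T_N(\symb)$ is self-adjoint with $\norm{T_N(\symb)}\le\norm{\symb}_\infty$ and, by the compression $T_N(\symb)=P_NT(\symb)P_N$ together with the operator inequality $(\ess\inf\symb)I\le T(\symb)\le(\ess\sup\symb)I$, its spectrum lies in $[\ess\inf\symb,\ess\sup\symb]$. Consequently $\abs{\frac1N\Tr g(T_N(\symb))}\le\max_{[\ess\inf\symb,\ess\sup\symb]}\abs{g}$ for every continuous $g$. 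Uniformly approximating $f$ by polynomials on this interval therefore reduces the statement to the monomial case $f(x)=x^n$: it suffices to show
\[
\lim_{N\to\infty}\frac1N\Tr T_N(\symb)^n=\int_\bbT\symb(z)^n\,dm(z)
\label{pl1}
\]
for each fixed $n\ge1$.

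The right-hand side of \eqref{pl1} equals $\wh{\symb^n}(0)$, and since this is the common value of every diagonal entry of the Toeplitz matrix $T_N(\symb^n)$, it coincides exactly with $\frac1N\Tr T_N(\symb^n)$. So \eqref{pl1} is equivalent to
\[
\lim_{N\to\infty}\frac1N\Tr\bigl(T_N(\symb)^n-T_N(\symb^n)\bigr)=0,
\label{pl2}
\]
and the whole problem reduces to controlling the difference $T_N(\symb)^n-T_N(\symb^n)$.

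The cornerstone step is to prove \eqref{pl2} first for a trigonometric polynomial $\symb=p$ of some degree $d$. In that case $T_N(p)$ is banded of bandwidth $d$, and the elementary identity
\[
\bigl(T_N(fg)-T_N(f)T_N(g)\bigr)_{jk}=\sum_{m\notin\{0,\dots,N-1\}}\wh f(j-m)\wh g(m-k),
\]
iterated along the product $T_N(p)\cdots T_N(p)$, shows that $T_N(p)^n-T_N(p^n)$ is supported in two corner blocks of size at most $nd\times nd$ near the indices $0$ and $N-1$, with entries bounded in terms of $n,d,\norm{p}_\infty$. Its trace is therefore $O(1)$ in $N$, and dividing by $N$ yields \eqref{pl2} for polynomials.

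To pass from polynomials to an arbitrary $\symb\in L^\infty(\bbT)$, I will approximate $\symb$ in $L^2(\bbT)$ by its Fej\'er means $p_k$, which obey $\norm{p_k}_\infty\le\norm{\symb}_\infty$ and $\norm{\symb-p_k}_{L^2}\to0$. The elementary bound $\norm{T_N(\psi)}_{\Sch_2}^2\le N\norm{\psi}_{L^2}^2$, combined with the telescoping identity $A^n-B^n=\sum_{j=0}^{n-1}A^{n-1-j}(A-B)B^j$ and the trace inequality $\abs{\Tr(XY)}\le\norm{X}_{\Sch_2}\norm{Y}_{\Sch_2}$, produces
\[
\frac1N\Abs{\Tr T_N(\symb)^n-\Tr T_N(p_k)^n}\le n\norm{\symb}_\infty^{n-1}\norm{\symb-p_k}_{L^2},
\]
and since $p_k^n\to\symb^n$ in $L^1(\bbT)$, the same uniform-in-$N$ comparison is trivial between $\frac1N\Tr T_N(\symb^n)=\int\symb^n\,dm$ and $\frac1N\Tr T_N(p_k^n)=\int p_k^n\,dm$. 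Sending $N\to\infty$ with $p_k$ fixed and then $k\to\infty$ closes the argument. The technical heart of the proof is this last estimate, where the factor $\sqrt N$ from $\norm{T_N(\symb-p_k)}_{\Sch_2}\le\sqrt N\norm{\symb-p_k}_{L^2}$ must be exactly balanced against the $1/N$ normalisation of the trace; this is precisely the reason for working with $\Sch_2$ rather than $\Sch_1$ estimates.
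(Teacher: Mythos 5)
Your proof is correct, but note that the paper does not actually prove Theorem~\ref{thm.a1}: it quotes it from Szeg\H{o} and only proves the multiplicative analogues (Theorems~\ref{thm.a2} and \ref{thm.a3}) in the Appendix, so the natural comparison is with that argument specialised to $\bbT$. Both proofs reduce, after a Weierstrass-type reduction in $f$ (you use one-sided uniform approximation plus the spectral localisation of $T_N(\symb)$ in $[\essinf\symb,\esssup\symb]$; the paper uses the two-sided bracketing $f_-\le f\le f_+$), to controlling $\Tr\bigl(T_N(\symb)^n-T_N(\symb^n)\bigr)=o(N)$, since $\frac1N\Tr T_N(\symb^n)=\wh{\symb^n}(0)$. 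The mechanisms differ genuinely at this point. You perform a second approximation, in the symbol: the corner-block count for banded $T_N(p)$ with $p$ a trigonometric polynomial gives an $O(1)$ trace bound, and the Fej\'er/Hilbert--Schmidt estimate $\norm{T_N(\symb-p_k)}_{\Sch_2}\le\sqrt N\,\norm{\symb-p_k}_{L^2}$, combined with the telescoping identity and $\abs{\Tr(XY)}\le\norm{X}_{\Sch_2}\norm{Y}_{\Sch_2}$, transfers this to general $\symb\in L^\infty$ uniformly in $N$ (your exchange of limits at the end is handled correctly). The paper instead avoids approximating the symbol altogether: it embeds $T_N(\symb)$ as a compression $\pi_N\bL(\symb)\pi_N$ of the multiplication (Laurent) operator, uses $f(\bL(\symb))=\bL(f\circ\symb)$ for polynomial $f$, the abstract trace estimate of Proposition~\ref{lma.b3}, $\norm{\pi L^n\pi-(\pi L\pi)^n}_{\Sch_1}\le\frac{n(n-1)}2\norm{L}^{n-2}\norm{\pi L(1-\pi)}_{\Sch_2}^2$, and a single dominated-convergence argument (Lemma~\ref{lma.b2}) showing $\frac1N\norm{\pi_N\bL(\symb)(I-\pi_N)}_{\Sch_2}^2\to0$. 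Your route is more elementary and self-contained for the classical circle case, but leans on the concrete banded structure of $T_N(p)$ for the ordered index set $\{0,\dots,N-1\}$; the paper's route is structural and carries over verbatim to arbitrary F{\o}lner truncations and to compact abelian groups such as $\bbT^\infty$, which is exactly what the rest of the paper needs. (Minor point: the interval in the statement should read $[\essinf\symb,\esssup\symb]$, as you implicitly assumed.)
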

By standard methods one obtains
%%%%%%%%%%%%%%%%%%%%%%
\begin{corollary}\label{cr.a2}
%%%%%%%%%%%%%%%%%%%%%%
If $\lambda_-,\lambda_+\in\bbR$ are such that $\lambda_-<\lambda_+$ and 
\[
m(\symb^{-1}(\{\lambda_-\}))
=
m(\symb^{-1}(\{\lambda_+\}))
=0,
\label{a2b}
\]
then for $\Delta=(\lambda_-,\lambda_+)$  we have
\[
\lim_{N\to\infty}
\frac1N\#\{k: \lambda_k(T_N(\symb))\in\Delta\}
=
m(\{z\in\bbT: \symb(z)\in\Delta\}),
\label{a2a}
\]
where $\{\lambda_k(T_N(\symb))\}_{k=1}^N$ are the eigenvalues of $T_N(\symb)$. 
\end{corollary}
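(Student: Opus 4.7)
The plan is to deduce the corollary from Theorem~\ref{thm.a1} by approximating the indicator $\1_\Delta$ by continuous functions from above and below. The starting observation is that
$$
\tfrac{1}{N}\#\{k : \lambda_k(T_N(\symb)) \in \Delta\} = \tfrac{1}{N}\Tr \1_\Delta(T_N(\symb)),
$$
so one would like to insert $f = \1_\Delta$ into \eqref{a2} directly; the obstruction is that $\1_\Delta$ is discontinuous at $\lambda_\pm$. The role of hypothesis \eqref{a2b} will be precisely to render this discontinuity harmless on the right-hand side of \eqref{a2}.

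For each $\eps>0$, I would choose continuous functions $f_\eps^\pm:\bbR\to[0,1]$ with $f_\eps^-\le \1_\Delta\le f_\eps^+$, where $f_\eps^+$ equals $1$ on $[\lambda_-,\lambda_+]$ and is supported in $[\lambda_--\eps,\lambda_++\eps]$, while $f_\eps^-$ equals $1$ on $[\lambda_-+\eps,\lambda_+-\eps]$ and is supported in $[\lambda_-,\lambda_+]$. Since $T_N(\symb)$ is self-adjoint with spectrum contained in $[\ess\inf\symb,\ess\sup\symb]$, these pointwise bounds translate, via the functional calculus applied at each eigenvalue, into operator inequalities $f_\eps^-(T_N(\symb))\le \1_\Delta(T_N(\symb))\le f_\eps^+(T_N(\symb))$. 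Taking normalised traces yields
$$
\tfrac{1}{N}\Tr f_\eps^-(T_N(\symb))\le \tfrac{1}{N}\#\{k:\lambda_k(T_N(\symb))\in\Delta\}\le \tfrac{1}{N}\Tr f_\eps^+(T_N(\symb)).
$$

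Applying Theorem~\ref{thm.a1} to each of the continuous functions $f_\eps^\pm$ and passing to the limit in $N$ gives
$$
\int_\bbT f_\eps^-(\symb)\,dm \le \liminf_{N\to\infty}\tfrac{1}{N}\#\{\,\cdot\,\} \le \limsup_{N\to\infty}\tfrac{1}{N}\#\{\,\cdot\,\} \le \int_\bbT f_\eps^+(\symb)\,dm.
$$
Finally, letting $\eps\to 0$: for every $z\in\bbT$ with $\symb(z)\notin\{\lambda_-,\lambda_+\}$, both $f_\eps^\pm(\symb(z))$ converge to $\1_\Delta(\symb(z))$, and by \eqref{a2b} this happens $m$-a.e.; the bounded convergence theorem then forces both outer integrals to the common value $m(\{z\in\bbT:\symb(z)\in\Delta\})$, squeezing the middle expression to the required limit. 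This argument is entirely routine; the only substantive point is that the measure-zero hypothesis \eqref{a2b} is exactly what prevents mass from accumulating at the endpoints of $\Delta$, keeping the sandwich tight.
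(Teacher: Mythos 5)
Your proposal is correct and follows exactly the route the paper intends: the paper gives no detailed proof, saying only that the corollary follows from Theorem~\ref{thm.a1} ``by standard methods'' and noting in its remarks that the passage from continuous $f$ to $\chi_\Delta$ uses hypothesis \eqref{a2b}, which is precisely your squeeze argument with continuous functions $f_\eps^\pm$ sandwiching $\1_\Delta$ and bounded convergence at the end. No gaps; the argument is complete as written.
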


\begin{remark*}
\begin{enumerate}[1.]
\item
The assumptions on  $\symb$ and  $f$ in Theorem~\ref{thm.a1}
can be considerably relaxed. 
Our purpose here is only to set the scene and to give the context for the multiplicative analogues below, and so 
we are quoting the simplest version of the First Szeg\H{o} Limit  Theorem. 
For a more in-depth discussion of this topic, see e.g. \cite[Section 5.4]{BS} or \cite[Section 2.7.7]{Simon}.

\item
Formula \eqref{a2a} can be rephrased by saying that the asymptotic spectral density of $\{T_N(\symb)\}_{N=1}^\infty$
is given by the push-forward $m_\symb$ of the Lebesgue measure $m$  by the map $\symb$, 
i.e. $m_\symb(\Delta)=m(\symb^{-1}(\Delta))$.
The measure $m_\symb$ contains some information about the spectral measure $E_{M(\symb)}$ of the operator
$M(\symb)$ of multiplication by $\symb$; for example, $m_\symb$ and $E_{M(\symb)}$
are mutually absolutely continuous. 
However, some other spectral invariants of $M(\symb)$, such as the
spectral multiplicity function, are not determined by $m_\symb$. 
Moreover, $m_\symb$ has little in common with the spectral measure of the Toeplitz operator $T(\symb)$
(which is always purely absolutely continuous, unless $\symb$ is constant). 
Recall that the spectra of $T(\symb)$ and $M(\symb)$ are, in general, distinct, and, although the finite sections 
$T_N(\symb)=(T(\symb)_{ij})_{0\leq i,j\leq N}=(M(\symb)_{ij})_{0\leq i,j\leq N}$ coincide, 
 the 
spectrum of $T_N(\symb)$, as a set, converges to the spectrum of $T(\symb)$. 
See \cite[Section 5.6.3]{Nik2017} for a discussion of this phenomenon.

\item
Formally, \eqref{a2a} is a particular case of \eqref{a2} with $f=\chi_{\Delta}$. 
The (easy) justification of the limiting process from continuous $f$ to 
discontinuous $\chi_{\Delta}$ requires the additional assumption \eqref{a2b}. 

\item
An important particular case of Theorem~\ref{thm.a1} is $f(x)=\log x$ (provided the 
range of $\symb$ is a compact set in $(0,\infty)$). 
Then the theorem can be written as
\[
\lim_{N\to\infty}\bigl(\det T_N(\symb))^{1/N}
=
\exp\biggl(\int_{\bbT} \log \symb(z)dm(z) \biggr).
\label{a3}
\]
In fact, this formula remains true for Toeplitz matrices 
$$
T_N(\mu)=\{\wh\mu(j-k)\}_{j,k=0}^{N-1},
$$ 
associated with any Borel measure $\mu\geq0$ with the Radon-Nikodym 
decomposition $\mu=\symb dm+\mu_{\text{sing}}$.
The corresponding Toeplitz operator $T(\mu)$ does not need to be bounded
on $\ell^2(\bbZ_+)$, nor $\log \symb$ has to be integrable. 
For discussions and further results, see \cite[p.141]{Simon} and \cite{Nik2017}. 

\item
We do not discuss the strong Szeg\H{o} theorem, which deals with the second term
in \eqref{a2}.
\end{enumerate}
\end{remark*}

%%%%%%%%%%%%%%%%%%%%%%%%%%%%%%%%%
\subsection{Extension to F{\o}lner sequences}\label{sec.a3}
%%%%%%%%%%%%%%%%%%%%%%%%%%%%%%%%%
In order to prepare the ground for the multiplicative set-up below, 
here we briefly discuss an extension of the classical Szeg\H{o} theorem.

Let $\sigma\subset\bbN$ be a finite set. 
We denote by $T_\sigma(\symb)$ the finite section of the infinite matrix $T(\symb)$
corresponding to the indices restricted to the set $\sigma$, i.e. 
$$
T_\sigma(\symb)=\{\wh\symb(j-k)\}_{j,k\in\sigma}. 
$$
We will say that a sequence $\{\sigma_N\}_{N=1}^\infty$ of subsets of $\bbZ_+$
is an \emph{additive F{\o}lner sequence,} if for any $n\in\bbZ_+$ we have
\[
\frac{\#\{k\in\sigma_N: k+n\in \sigma_N\}}{\#\sigma_N}\to1, \quad \text{ as } N\to\infty.
\label{fo1}
\]
F{\o}lner sequences can be defined in the context of any semigroup acting on a countable set;
here we have the action of the additive semigroup $\bbZ_+$ on itself. 
Clearly, $\sigma_N=\{0,1,\dots,N-1\}$ is an additive F{\o}lner sequence.

%%%%%%%%%%%%%%%
\begin{theorem}\label{thm.a1a}
%%%%%%%%%%%%%%%
Let $\symb\in L^\infty(\bbT)$ be a real-valued function, and let 
$\{\sigma_N\}_{N=1}^\infty$ 
be an additive F{\o}lner sequence of finite subsets of $\bbZ_+$. 
Then for any function $f$, continuous on $[\ess\inf\symb,\ess\sup\symb]$, we have
\[
\lim_{N\to\infty}
\frac1{\#\sigma_N}\Tr f(T_{\sigma_N}(\symb))
=
\int_{\bbT}f(\symb(z))dm(z).
\label{a3a}
\]
\end{theorem}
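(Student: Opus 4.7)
The plan is to reduce the statement to test functions of the form $f(x)=x^p$ and then perform a direct commutator / counting argument using the F{\o}lner property \eqref{fo1}.

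First I would observe that both sides of \eqref{a3a} are uniformly bounded in $\varphi$ for fixed $N$ (the left side by $\norm{\varphi}_\infty$ since $\norm{T_{\sigma_N}(\varphi)}\le\norm{\varphi}_\infty$, the right side trivially). Hence, by the Weierstrass approximation theorem on the compact interval $[\essinf\varphi,\ess\sup\varphi]$, it suffices to verify \eqref{a3a} for $f(x)=x^p$ with $p\in\bbN$. The cases $p=0,1$ are immediate: $p=0$ is trivial, while $p=1$ amounts to $\Tr T_{\sigma_N}(\varphi)=\#\sigma_N\cdot \wh\varphi(0)=\#\sigma_N\int_\bbT\varphi\,dm$, since every diagonal entry equals $\wh\varphi(0)$.

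The main step is therefore to show that, for each fixed $p\ge 2$,
\[
\frac{1}{\#\sigma_N}\bigl(\Tr T_{\sigma_N}(\varphi)^p-\Tr T_{\sigma_N}(\varphi^p)\bigr)\to 0,
\label{plan1}
\]
because $\Tr T_{\sigma_N}(\varphi^p)=\#\sigma_N\int_\bbT\varphi^p\,dm$ handles the $p$-th term of \eqref{a3a}. To prove \eqref{plan1} I would realise the truncations inside the multiplication operator on $\ell^2(\bbZ)$. Writing $M(\varphi)$ for multiplication by $\varphi$ (i.e.\ the full Laurent matrix $\{\wh\varphi(j-k)\}_{j,k\in\bbZ}$) and $P_\sigma$ for the orthogonal projection onto $\ell^2(\sigma)$, one has $T_{\sigma_N}(\varphi)=P_{\sigma_N}M(\varphi)P_{\sigma_N}$ and therefore
\[
\Tr T_{\sigma_N}(\varphi)^p-\Tr P_{\sigma_N}M(\varphi)^pP_{\sigma_N}
\]
can be expanded telescopically as a sum of $p-1$ terms, each containing a factor $(I-P_{\sigma_N})M(\varphi)P_{\sigma_N}$ or its adjoint. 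It is then enough to show that for every trigonometric polynomial $\varphi$,
\[
\Norm{(I-P_{\sigma_N})M(\varphi)P_{\sigma_N}}_{\Sch_2}^2=o(\#\sigma_N),
\]
which follows at once from \eqref{fo1}: the squared Hilbert--Schmidt norm equals $\sum_{k\in\sigma_N}\sum_{j\notin\sigma_N}\abs{\wh\varphi(j-k)}^2$, and because $\wh\varphi$ has finite support $F\subset\bbZ$, this sum is bounded by $\norm{\varphi}_\infty^2\cdot\#F\cdot\max_{n\in F}\#\{k\in\sigma_N:k+n\notin\sigma_N\}$, which is $o(\#\sigma_N)$ by the F{\o}lner condition.

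Finally I would pass from trigonometric polynomials $\varphi$ to general $\varphi\in L^\infty(\bbT)$ by $L^2$-approximation: given $\eps>0$, write $\varphi=\varphi_\eps+\psi_\eps$ with $\varphi_\eps$ a trigonometric polynomial and $\norm{\psi_\eps}_{L^2}<\eps$, control the remainder uniformly in $N$ via the inequality $\norm{T_{\sigma_N}(\psi_\eps)}_{\Sch_2}^2\le\#\sigma_N\cdot\norm{\psi_\eps}_{L^2}^2$, and combine with the Hilbert--Schmidt bound on $\Tr T_{\sigma_N}(\varphi)^p-\Tr T_{\sigma_N}(\varphi_\eps)^p$ (a multilinear estimate in the operators $T_{\sigma_N}(\varphi)$ and $T_{\sigma_N}(\varphi_\eps)$). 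I expect the main obstacle to be a clean bookkeeping of this last approximation step: one must keep track of both the dependence of the error on $p$ and the fact that only $\varphi\in L^\infty$ is assumed, so that operator norms (used for the ``outer'' factors in the telescoping) and Hilbert--Schmidt norms (for the single factor being replaced) have to be combined carefully. Once this is done, \eqref{plan1} follows for arbitrary $\varphi\in L^\infty$ and the theorem is proved.
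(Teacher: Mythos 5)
Your argument is correct, and its core is the same compression scheme the paper itself uses in the Appendix to prove the multiplicative analogue (Theorem~\ref{thm.a2}): realise $T_{\sigma_N}(\symb)=P_{\sigma_N}M(\symb)P_{\sigma_N}$ inside the Laurent operator, telescope $\Tr\bigl(P_{\sigma_N}M(\symb)^pP_{\sigma_N}\bigr)-\Tr\bigl(P_{\sigma_N}M(\symb)P_{\sigma_N}\bigr)^p$ into terms carrying the off-diagonal block, and make $\norm{(I-P_{\sigma_N})M(\symb)P_{\sigma_N}}_{\Sch_2}^2=o(\#\sigma_N)$ via the F{\o}lner condition; this is exactly the combination of Lemma~\ref{lma.b2} and Proposition~\ref{lma.b3}. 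Where you deviate, the deviations cost rather than buy: the detour through trigonometric-polynomial symbols plus an $L^2$-approximation of $\symb$ is unnecessary, since in $\sum_{n}\abs{\wh\symb(n)}^2\bigl(\#\{k\in\sigma_N:k+n\notin\sigma_N\}/\#\sigma_N\bigr)$ each bracket tends to $0$ and is bounded by $1$, so dominated convergence gives the estimate for all $\symb\in L^\infty(\bbT)$ at once (this is how Lemma~\ref{lma.b2} is proved), and the bookkeeping you flag as the main obstacle --- which indeed requires Fej\'er-type approximants to keep $\norm{\symb_\eps}_{L^\infty}\le\norm{\symb}_{L^\infty}$ for the outer operator-norm factors --- disappears entirely. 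Two small points you should make explicit: in the telescoping each term has the single factor $I-P_{\sigma_N}$ sandwiched between powers of $M(\symb)$, so the trace pairing also needs the auxiliary bound $\norm{P_{\sigma_N}M(\symb)^j(I-P_{\sigma_N})}_{\Sch_2}\le j\norm{\symb}_{L^\infty}^{j-1}\norm{P_{\sigma_N}M(\symb)(I-P_{\sigma_N})}_{\Sch_2}$ (inequality \eqref{b1} in the paper, proved by an easy induction); and \eqref{fo1} is stated only for $n\in\bbZ_+$, while your double sum involves negative differences $j-k$, the extension to negative shifts being immediate by the substitution $k\mapsto k+n$ (the multiplicative counterpart is Lemma~\ref{lma.b2a}). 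Your reduction of general continuous $f$ to monomials via Weierstrass, using that the spectra of the compressions lie in $[\essinf\symb,\ess\sup\symb]$, is an acceptable substitute for the paper's two-sided polynomial squeeze.
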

This theorem was proved in the doctoral dissertation of D.~B.~SeLegue, 
following general ideas of W.~Arveson (see \cite{HRS2000,ALY2013} for references). 
It can also be regarded as a particular case of the more general Theorem~\ref{thm.bedos}, see below. 
Later on, similar and related constructions were considered by many authors \cite{Otte,BottcherOtte}.

\begin{remark*}
In fact, SeLegue's construction gives asymptotic spectral density for a much wider set of 
operators, namely for all $T$ from the $C^*$-algebra $A$ generated by Toeplitz operators
(see \cite{HRS2000} for an excellent presentation). 
Namely, if $P_\sigma$ is the orthogonal projection onto $\Span\{z^k: k\in\sigma\}$, then 
for every self-adjoint $T\in A$ and every $f\in C(\bbR)$ 
$$
\lim_{N\to\infty}
\frac1{\#\sigma_N}\Tr f(P_{\sigma_N}T |_{\Ran P_{\sigma_N}})
=
\int_{\bbT}f\circ\Symb(T)\, dm,
$$
where $\Symb(T)\in L^\infty(\bbT)$ stands for the symbol of $T$, which is defined through the 
continuous extension of the elementary symbol map 
$$
\Symb: \sum_i\prod_j T(\varphi_{i,j})\mapsto \sum_i\prod_j \varphi_{i,j}, \quad \varphi_{i,j}\in L^\infty(\bbT),
$$
see \cite{Nik2017} for the details related to the symbol map.
\end{remark*}

%%%%%%%%%%%%%%%%%%%%%%%%%%%%%%%%
\subsection{Sharpness of the F{\o}lner condition}
%%%%%%%%%%%%%%%%%%%%%%%%%%%%%%%%

It is easy to see that Theorem~\ref{thm.a1a} is sharp in the following sense. 
%%%%%%%%%%%%%%%%%%%%
\begin{proposition}\label{prp.a1b}
%%%%%%%%%%%%%%%%%%%%
Let $\{\sigma_N\}_{N=1}^\infty$ be a sequence of finite subsets of $\bbZ_+$ such 
that for some $n\in\bbZ_+$, the additive F{\o}lner condition \eqref{fo1} fails. 
Then there exists a real-valued symbol $\symb\in C(\bbT)$ 
such that the conclusion \eqref{a3a} of Theorem~\ref{thm.a1a} fails already for $f(x)=x^2$. 
In fact, one can take $\symb(z)=z^n+\overline{z}^n$. 
\end{proposition}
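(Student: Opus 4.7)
The plan is to take the symbol $\symb$ proposed in the statement, namely $\symb(z)=z^n+\overline{z}^n=2\cos(n\arg z)$, and directly compute $\frac{1}{\#\sigma_N}\Tr f(T_{\sigma_N}(\symb))$ for $f(x)=x^2$ in terms of the very quantity that appears on the left-hand side of the Følner condition \eqref{fo1}. The argument then reduces to a one-line comparison. Since \eqref{fo1} holds trivially for $n=0$, we may assume $n\geq 1$.

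First I would record that $\symb$ is continuous and real-valued on $\bbT$, with Fourier coefficients $\wh\symb(k)=1$ for $k=\pm n$ and $\wh\symb(k)=0$ otherwise, so that the matrix entries of the finite section are
$$
(T_{\sigma_N}(\symb))_{j,k}=\1[|j-k|=n], \qquad j,k\in\sigma_N.
$$
On the other side, since $\symb(z)^2=z^{2n}+2+z^{-2n}$, the right-hand side of \eqref{a3a} equals
$$
\int_\bbT\symb(z)^2\,dm(z)=2.
$$

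Next I would evaluate the trace of the square as a double sum:
$$
\Tr\bigl(T_{\sigma_N}(\symb)^2\bigr)=\sum_{j,k\in\sigma_N}\1[|j-k|=n]=2\,\#\{k\in\sigma_N:k+n\in\sigma_N\},
$$
where the factor $2$ comes from the fact that, for $n\geq 1$, each pair $\{k,k+n\}\subset\sigma_N$ contributes to both orderings. Dividing by $\#\sigma_N$ gives
$$
\frac{1}{\#\sigma_N}\Tr f(T_{\sigma_N}(\symb))=2\cdot\frac{\#\{k\in\sigma_N:k+n\in\sigma_N\}}{\#\sigma_N},
$$
which is exactly twice the quantity appearing in \eqref{fo1}.

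Finally, the assumed failure of \eqref{fo1} at this $n$ means the ratio on the right does not tend to $1$. Since it lies in $[0,1]$, by Bolzano-Weierstrass it has a subsequential limit $L<1$, along which the left-hand side of \eqref{a3a} tends to $2L<2$, contradicting the desired conclusion. There is no real obstacle: the key observation is simply that the $n$-th Fourier harmonic is the ``matched'' test symbol that detects the $n$-th instance of the Følner condition, and evaluating against $f(x)=x^2$ converts the operator quantity into a combinatorial count that coincides verbatim with the numerator in \eqref{fo1}.
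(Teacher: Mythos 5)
Your proof is correct and follows essentially the same route as the paper: compute $\Tr T_{\sigma_N}(\symb)^2$ for $\symb(z)=z^n+\overline z^n$ as twice the count $\#\{k\in\sigma_N:k+n\in\sigma_N\}$, compare with the right-hand side $2$, and conclude from the failure of \eqref{fo1}. The only (harmless) differences are your explicit dismissal of $n=0$ and the subsequence argument at the end, which match the paper's implicit reasoning.
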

\begin{proof} 
Let $\symb(z)=z^n+\overline{z}^n$ and $f(x)=x^2$;  then the r.h.s. of \eqref{a3a} is 
$$
\int_{\bbT}\symb(z)^2 dm(z)=2. 
$$
Now let us consider the left hand side. 
We have
\begin{align*}
\Tr T_{\sigma_N}(\symb)^2
&=
\sum_{j\in\sigma_N}\sum_{k\in\sigma_N}\wh\symb(j-k)\wh\symb(k-j)
=
\sum_{j\in\sigma_N}\sum_{k\in\sigma_N}\abs{\wh\symb(j-k)}^2
\\
&=
\sum_{r\in\bbZ}\abs{\wh\symb(r)}^2\#\{k\in \sigma_N: k+r\in\sigma_N\}
\\
&=
\#\{k\in\sigma_N: k+n\in\sigma_N\}
+
\#\{k\in\sigma_N: k-n\in\sigma_N\}.
\end{align*}
Observe that by the change of parameter $k'=k-n$, the two terms in the r.h.s. here are equal
to one another. We conclude that 
$$
\frac{\Tr T_{\sigma_N}(\symb)^2}{\#\sigma_N}
=
2
\frac{\#\{k\in \sigma_N: k+n\in \sigma_N\}}{\#\sigma_N},
$$
and by assumption the r.h.s. does not converge to $2$ as $N\to\infty$. 
Thus, \eqref{a5} fails. 
\end{proof}

%%%%%%%%%%%%%%%%%%%%%%%%%%
\subsection{Non-F{\o}lner sequences}
%%%%%%%%%%%%%%%%%%%%%%%%%%

Here we discuss some examples of  sequences
that do NOT satisfy the additive F{\o}lner condition. 
Our purpose is to illustrate two possibilities: convergence to a ``wrong" limit and divergence. 

\begin{example}\label{ex1}
Let 
$$
\sigma_N=\{0,2,4,\dots,2N\}.
$$
Then $\{\sigma_N\}_{N=1}^\infty$ is not an additive F{\o}lner sequence. 
It is easy to see that in this case the limit in \eqref{a3a} exists, 
but is given by the modified expression: 
$$
\lim_{N\to\infty}\frac1{\#\sigma_N}\Tr f(T_{\sigma_N}(\symb))
=
\int_{\bbT}f(\symb_{2}(z))dm(z), 
$$
where $\symb_{2}$ is the even part of $\symb$, 
$$
\symb_{2}(z)=\frac12(\symb(z)+\symb(-z)).
$$
More generally, if $\ell\geq2$, then setting $\sigma_N=\{0,\ell,2\ell,3\ell,\dots,N\ell \}$ yields
$$
\lim_{N\to\infty}\frac1{\#\sigma_N}\Tr f(T_{\sigma_N}(\symb))
=
\int_{\bbT}f(\symb_{\ell}(z))dm(z), 
$$
where 
$$
\symb_\ell(z)=\frac1\ell\sum_{j=0}^{\ell-1} \symb(ze^{2\pi i j/\ell}).
$$
\end{example}
One can object that in this example the union $\cup_{N=1}^\infty \sigma_N$ is not the whole of $\bbZ_+$. 
However, it is easy to modify this example  to fix this problem. We use the following 

%%%%%%%%%%%%%%
\begin{lemma}[Finite subsets are negligible]\label{lma.a6}
%%%%%%%%%%%%%%
Let $\symb\in L^\infty(\bbT)$, let $\{\sigma_N\}_{N=1}^\infty$ be a (possibly non-F{\o}lner) 
sequence of finite subsets of $\bbZ_+$ with $\#\sigma_N\to\infty$, and let $\sigma\subset\bbZ_+$ be a finite set;
denote $\sigma_N'=\sigma_N\cup\sigma$. Then for every polynomial $f$, the limits 
$$
\lim_{N\to\infty} \frac1{\#\sigma_N}\Tr f(T_{\sigma_N}(\symb))
\quad \text{ and }\quad 
\lim_{N\to\infty} \frac1{\#\sigma_N'}\Tr f(T_{\sigma_N'}(\symb))
$$
exist or do not exist simultaneously; if they exist, their values coincide. 
\end{lemma}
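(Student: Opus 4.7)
The plan is to compare $T_{\sigma_N'}(\symb)$ with $T_{\sigma_N}(\symb)$ via a rank-controlled perturbation argument, then use the Lipschitz-in-trace-norm behaviour of polynomials on operators of uniformly bounded norm. The key point is that $\sigma$ is fixed and finite, so $\sigma_N'\setminus\sigma_N$ has cardinality at most $m:=\#\sigma$ for every $N$.

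First I would observe the trivial counting fact $\#\sigma_N\le\#\sigma_N'\le\#\sigma_N+m$, which together with $\#\sigma_N\to\infty$ gives $\#\sigma_N'/\#\sigma_N\to 1$. Next, identify $\ell^2(\sigma_N)$ with the obvious subspace of $\ell^2(\sigma_N')$, let $P$ be the orthogonal projection onto it, and set $A=T_{\sigma_N'}(\symb)$. Then $PAP$ acts as $T_{\sigma_N}(\symb)$ on $\Ran P$ and as $0$ on $\Ran(I-P)$, so
\[
\Tr f(PAP)=\Tr f(T_{\sigma_N}(\symb))+f(0)\cdot\#(\sigma_N'\setminus\sigma_N).
\]
The correction term is bounded in $N$, hence negligible after division by $\#\sigma_N$.

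The main work is estimating $\Tr f(A)-\Tr f(PAP)$. Both $A$ and $PAP$ have norm $\le\norm{\symb}_{L^\infty}$, and the difference $A-PAP=(I-P)A+PA(I-P)-(I-P)A(I-P)$ has rank at most $2\#(\sigma_N'\setminus\sigma_N)\le 2m$. Consequently $\norm{A-PAP}_{\Sch_1}$ is bounded by a constant depending only on $\symb$ and $m$. For $f(x)=x^k$, the telescoping identity
\[
A^k-(PAP)^k=\sum_{j=0}^{k-1}A^j(A-PAP)(PAP)^{k-1-j}
\]
combined with the H\"older-type bound $\norm{XYZ}_{\Sch_1}\le\norm{X}\,\norm{Y}_{\Sch_1}\norm{Z}$ yields $\abs{\Tr f(A)-\Tr f(PAP)}\le C(f,\symb,\sigma)$, with a constant independent of $N$. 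By linearity this extends to an arbitrary polynomial $f$.

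Putting the pieces together, $\abs{\Tr f(T_{\sigma_N'}(\symb))-\Tr f(T_{\sigma_N}(\symb))}$ is bounded uniformly in $N$, so after dividing by $\#\sigma_N$ the difference tends to zero; combined with $\#\sigma_N/\#\sigma_N'\to 1$ and the uniform boundedness of $\frac{1}{\#\sigma_N'}\abs{\Tr f(T_{\sigma_N'}(\symb))}$, this gives the equivalence and equality of the two limits. I do not expect any genuine obstacle: the only mildly delicate point is bookkeeping the rank-$2m$ perturbation and turning it into a trace-class estimate via the telescoping identity, which is standard.
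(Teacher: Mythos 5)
Your argument is correct and follows essentially the same route as the paper: your compression $PAP$ is exactly the paper's zero-padded matrix $B_N$, and the mechanism is identical (telescoping identity for powers, a trace-norm bound on $A-PAP$ --- you get a constant via the rank bound $\le 2\#\sigma$, the paper gets $O((\#\sigma_N')^{1/2})$ via a Hilbert--Schmidt estimate, and either is $o(\#\sigma_N)$ --- then division by $\#\sigma_N\to\infty$ together with $\#\sigma_N'/\#\sigma_N\to1$). One minor slip: the identity should read $A-PAP=(I-P)A+PA(I-P)$; your displayed right-hand side equals $(I-P)AP+PA(I-P)$ and omits the block $(I-P)A(I-P)$, but this does not affect the bounded-rank and trace-norm estimates you actually use.
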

\begin{proof}
If $f(x)=\const$, the statement is obvious. 
Let us consider the case $f(x)=x^m$, $m\geq1$.
For brevity, let us denote  $A_N=T_{\sigma_N'}(\symb)=\{\wh\symb(j-k)\}_{j,k\in\sigma_N'}$
(this is a matrix of the size  $(\#\sigma_N')\times(\#\sigma_N')$), and let $B_N=\{(B_N)_{j,k}\}_{j,k\in\sigma_N'}$ be the matrix of the same size
defined as follows:
$$
(B_N)_{j,k}=
\begin{cases}
\wh \symb(j-k), & j,k\in\sigma_N,
\\ 0, &\text{at least one of $j,k$ is not in $\sigma_N$.}
\end{cases}
$$
By this definition, we have 
$$
\Tr (T_{\sigma_N}(\symb))^m=\Tr B_N^m. 
$$
For the matrix $C_N=A_N-B_N$ we obtain
$$
\norm{C_N}_{\Sch_2}^2
=\sum_{j,k}\abs{\wh\symb(j-k)}^2, 
$$
where the sum is taken over $j,k\in\sigma_N'$ such that at least one of the indices $j,k$ is not in  $\sigma_N$. 
It follows that 
$$
\norm{C_N}_{\Sch_2}^2
\leq (\#\sigma)\sum_{j\in\bbZ}\abs{\wh\symb(j)}^2=C<\infty.
$$
Furthermore, we have 
$$
A_N^m-B_N^m=A_N^{m-1}(A_N-B_N)+A_N^{m-2}(A_N-B_N)B_N+\dots+(A_N-B_N)B_N^{m-1}, 
$$
and therefore 
\begin{align}
\norm{A_N^m-B_N^m}_{\Sch_1}
\leq&
\norm{A_N^{m-1}}\norm{A_N-B_N}_{\Sch_1}
+
\norm{A_N^{m-2}}\norm{B_N}\norm{A_N-B_N}_{\Sch_1}
\notag
\\
&+\dots+
\norm{B_N^{m-1}}\norm{A_N-B_N}_{\Sch_1}
\notag
\\
\leq&
m\max(\norm{A_N}^{m-1},\norm{B_N}^{m-1})
\norm{A_N-B_N}_{\Sch_1}. 
\label{a7}
\end{align}
Finally, 
$$
\norm{A_N-B_N}_{\Sch_1}
=
\norm{C_N}_{\Sch_1}
\leq
(\#\sigma'_N)^{1/2}\norm{C_N}_{\Sch_2},
$$
and so, putting this together, 
$$
\frac1{\#\sigma_N}
\Tr (A_N^m-B_N^m)=o(1),\quad N\to\infty.
$$
This proves the required statement for $f(x)=x^m$. 
The general case follows by  taking linear combinations.
\end{proof}

\begin{example}[Example \ref{ex1} modified]
For simplicity of notation, consider $\ell=2$. 
Let $\sigma_N$ be as in Example~\ref{ex1}, i.e. 
$$
\sigma_N=\{0,2,4,\dots,2N\}.
$$
For every $k\in\bbN$, let 
$$
\rho_k=\{0,\dots,k-1\}\cup\sigma_{N(k)}, 
$$
where the sequence $N(k)$ is chosen as follows. 
For every fixed $k$, by the above lemma we can choose $N(k)$ sufficiently large so that
$$
\Abs{\frac1{\#\sigma_{N(k)}}\Tr (T_{\sigma_{N(k)}}(\symb))^m
-
\frac1{\#\rho_{k}}\Tr (T_{\rho_{k}}(\symb))^m}
\leq 1/k
$$
for all $m=1,\dots,k$. 
Then for all polynomials $f$, we have 
$$
\lim_{k\to\infty}
\frac1{\#\rho_{k}}\Tr f(T_{\rho_{k}}(\symb))
=
\lim_{k\to\infty}
\frac1{\#\sigma_{N(k)}}\Tr f(T_{\sigma_{N(k)}}(\symb))
=
\int_{\bbT}f(\symb_{2}(z))dm(z), 
$$
and $\cup_{k=1}^\infty \rho_k=\bbZ_+$. 
\end{example}

\begin{example}
Let $\sigma_N=\{0,1,\dots,N-1\}$, and let $\rho_N$ be as in the previous 
example. Consider the sequence of sets 
$$
\rho_{N(1)}, \sigma_{N(2)}, \dots, \rho_{N(2j-1)}, \sigma_{N(2j)},\dots,
$$
where  $N(k)\nearrow\infty$ sufficiently fast. 
This is not an additive F{\o}lner sequence, although it exhausts $\bbZ_+$ and can be made monotone. 
The corresponding expression in the l.h.s. of \eqref{a3a} does NOT converge
to any limit, except in trivial cases. 
\end{example}

\begin{example}
Let us consider what happens if the sequence $\sigma_N$ is sufficiently sparse.    
Let $\sigma_N=\{1,3,3^2,\dots,3^{N-1}\}$. Then 
$$
T_{\sigma_N}=\{\wh\symb(3^j-3^k)\}_{j,k=0}^{N-1}=\wh\symb(0)I_N+A_N,
$$
where $I_N$ is the $N\times N$ identity matrix and $A_N$ is an $N\times N$ section of an infinite
Hilbert-Schmidt matrix. From here it is easy to conclude that 
$$
\lim_{N\to\infty}
\frac1{\#\sigma_N}\Tr f(T_{\sigma_N}(\symb))
=
f(\wh \symb(0))
$$
for all continuous functions $f$. 
Again, using Lemma~\ref{lma.a6}, it is easy to modify this example so that $\sigma_N$ satisfy
$\sigma_N \nearrow\bbZ_+$. 
\end{example}

%%%%%%%%%%%%%%%%%%%%%%%%%%%
\subsection{Extension to compact Abelian groups}
%%%%%%%%%%%%%%%%%%%%%%%%%%%

Here we recall an extension of the First Szeg\H{o} Limit Theorem
due to E.~B\'edos \cite{Bedos}, see also \cite{HRS2000,ALY2013}
for more details and references. 
Let $G$ be a compact abelian group (with additive notation), equipped
with the normalised Haar measure $m$, and let $\Gamma=\wh G$ be its
(discrete) character group. Given a function (symbol) $\varphi\in L^\infty(G)$, 
let  $M(\varphi)$ be the operator of multiplication by $\varphi$ in $L^2(G)$. 
Next, for a finite subset $\sigma\subset \Gamma$, we define the orthogonal projection
onto the subspace of polynomials with frequencies in $\sigma$: 
$$
P_\sigma\bigl(\sum_{\gamma\in \Gamma} a_\gamma \gamma\bigr)
=
\sum_{\gamma\in\sigma}a_\gamma \gamma,
$$
where $a_\gamma\in\bbC$ is any family of complex numbers with a finite support. 
Now let $T_\sigma(\varphi)$ be the finite truncation of $M(\varphi)$: 
$$
T_\sigma(\varphi)=P_\sigma M(\varphi)|_{P_\sigma L^2(G)}. 
$$
The matrix of $T_\sigma(\varphi)$ in the orthogonal basis of monomials in $\sigma$ is
$\{\wh\varphi(\alpha-\beta)\}_{\alpha,\beta\in\sigma}$. 
The \emph{F{\o}lner condition} for a sequence of finite subsets $\sigma_N\subset\Gamma$
is defined similarly to the case of $\Gamma=\bbZ$ above:
\[
\frac{\#\{\gamma\in \sigma_N: \gamma+\alpha\in\sigma_N\}}{\#\sigma_N}\to1, \quad \text{ as $N\to\infty$,}
\label{a4}
\]
for every $\alpha\in\Gamma$.

%%%%%%%%%%%%%%%
\begin{theorem}\label{thm.bedos}\cite[Theorem 11]{Bedos}
%%%%%%%%%%%%%%%
Let $\symb\in L^\infty(G)$ be a real-valued function, let 
$\{\sigma_N\}_{N=1}^\infty$ 
be a  F{\o}lner sequence of finite subsets of $\Gamma=\wh G$ and let $T_\sigma(\symb)$ be as defined above.
Then for any function $f$, continuous on $[\ess\inf\symb,\ess\sup\symb]$, we have
$$
\lim_{N\to\infty}
\frac1{\#\sigma_N}\Tr f(T_{\sigma_N}(\symb))
=
\int_{G}f\circ \symb\, dm.
$$
\end{theorem}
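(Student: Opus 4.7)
The plan is to reduce, via Weierstrass approximation and a trace-norm continuity argument, to the case of a monomial $f$ with a trigonometric-polynomial symbol, where the identity can be verified directly from the F{\o}lner condition. First, since the spectrum of each $T_{\sigma_N}(\symb)$ lies in $[\ess\inf\symb,\ess\sup\symb]$ (it is a compression of the self-adjoint operator $M(\symb)$), and every continuous $f$ on this interval is a uniform limit of polynomials, it suffices by linearity to treat $f(x)=x^m$ for each $m\geq 0$, i.e.\ to prove
$$
\frac{1}{\#\sigma_N}\Tr T_{\sigma_N}(\symb)^m \longrightarrow \int_G \symb^m\,dm.
$$

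Next I would reduce to a trigonometric-polynomial symbol. Choose trig polynomials $\symb_k$ with $\norm{\symb_k}_\infty \leq \norm{\symb}_\infty =: R$ and $\symb_k\to\symb$ in $L^2(G)$ (for instance, by convolution with a Fej\'er-type kernel on $G$). Writing $A=T_{\sigma_N}(\symb)$, $B=T_{\sigma_N}(\symb_k)$, expand $A^m - B^m = \sum_{j=0}^{m-1} A^{m-1-j}(A-B)B^j$ and use cyclicity together with the H\"older pairing $\abs{\Tr XY}\leq \norm{X}_{\Sch_2}\norm{Y}_{\Sch_2}$. Combined with $\norm{T_\sigma(\psi)}\leq\norm{\psi}_\infty$ and the Plancherel bound
$$
\norm{T_\sigma(\psi)}_{\Sch_2}^2 = \sum_{\alpha,\beta\in\sigma}\abs{\wh\psi(\alpha-\beta)}^2 \leq (\#\sigma)\norm{\psi}_{L^2(G)}^2,
$$
this yields $\abs{\Tr A^m - \Tr B^m}/\#\sigma_N \leq m R^{m-1}\norm{\symb-\symb_k}_{L^2(G)}$ uniformly in $N$; an analogous $L^2$ bound controls $\int_G(\symb^m - \symb_k^m)\,dm$. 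So it suffices to prove the limit for trigonometric-polynomial $\symb$.

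For such $\symb$, expand
$$
\Tr T_{\sigma_N}(\symb)^m = \sum_{\beta_1,\dots,\beta_m\in\sigma_N}\wh\symb(\beta_1-\beta_2)\wh\symb(\beta_2-\beta_3)\cdots\wh\symb(\beta_m-\beta_1),
$$
and reparametrize by $\gamma_i = \beta_i - \beta_{i+1}$ (indices cyclic modulo $m$), so that $\gamma_1+\cdots+\gamma_m = 0$ and $\beta_1\in\sigma_N$ is free subject to $\beta_1 - (\gamma_1+\cdots+\gamma_{i-1})\in\sigma_N$ for $i=2,\dots,m$. This recasts the normalized trace as
$$
\frac{\Tr T_{\sigma_N}(\symb)^m}{\#\sigma_N} = \sum_{\gamma_1+\cdots+\gamma_m=0}\wh\symb(\gamma_1)\cdots\wh\symb(\gamma_m)\cdot\frac{\#\bigl(\sigma_N\cap\bigcap_{i=1}^{m-1}(\sigma_N+\gamma_1+\cdots+\gamma_i)\bigr)}{\#\sigma_N},
$$
where the outer sum is finite because $\symb$ is a trig polynomial. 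The F{\o}lner condition \eqref{a4}, applied to each of the finitely many relevant translations (noting that finite intersections of sets of limit density $1$ again have limit density $1$), forces each density ratio to tend to $1$; the remaining sum equals $\wh{\symb^m}(0) = \int_G\symb^m\,dm$ by the convolution identity on $\Gamma$.

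The main obstacle is the uniform-in-$N$ passage from $\symb$ to $\symb_k$: a naive Schatten-$1$ estimate would scale like $\#\sigma_N$ and fail to survive the normalization. Exploiting $\norm{T_\sigma(\psi)}_{\Sch_2}\leq (\#\sigma)^{1/2}\norm{\psi}_{L^2(G)}$ through the Schatten-$2$ H\"older pairing produces exactly the factor $\sqrt{\#\sigma_N}$ needed to cancel the $\#\sigma_N$ in the denominator, leaving only the $L^2$-distance of the symbols. The rest is finite combinatorics, iterated F{\o}lner, and Weierstrass approximation.
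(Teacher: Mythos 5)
Your argument is correct, but it is a genuinely different route from the one the paper (and B\'edos) takes. The paper's proof (given in the Appendix for the multiplicative special case, Theorem~\ref{thm.a2}, and essentially identical to B\'edos' argument for Theorem~\ref{thm.bedos}) works with the full multiplication (Laurent) operator $L=M(\symb)$ and the projections $\pi_N$ onto $\Span\{\gamma:\gamma\in\sigma_N\}$: the F{\o}lner condition plus dominated convergence give the Hilbert--Schmidt estimate $\norm{\pi_N L(I-\pi_N)}_{\Sch_2}^2=o(\#\sigma_N)$ (Lemma~\ref{lma.b2}), a Laptev--Safarov-type inequality (Proposition~\ref{lma.b3}) then bounds $\norm{\pi_N L^m\pi_N-(\pi_N L\pi_N)^m}_{\Sch_1}$ by that quantity, and the exact identity $\Tr(\pi_N M(\symb^m)\pi_N)=(\#\sigma_N)\int\symb^m dm$ finishes the polynomial case; general continuous $f$ is handled by sandwiching between polynomials. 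You instead bypass the Laurent operator entirely: after the same Weierstrass reduction to monomials, you approximate the symbol in $L^2(G)$ by trigonometric polynomials with a uniform sup-norm bound, control the error uniformly in $N$ through the Schatten--H\"older pairing (note that here you also need $\norm{B^jA^{m-1-j}}_{\Sch_2}\le R^{m-1}(\#\sigma_N)^{1/2}$, which follows from the rank bound, and the case $m=1$ is the trivial identity $\Tr T_{\sigma_N}(\psi)=(\#\sigma_N)\wh\psi(0)$), and then evaluate $\Tr T_{\sigma_N}(\symb)^m$ for a trigonometric polynomial symbol by direct counting, where the F{\o}lner condition for the finitely many relevant shifts, together with the fact that finite intersections of translates of $\sigma_N$ still have relative density $1$, produces $\wh{\symb^m}(0)$. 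Your approach is more elementary and makes the combinatorial role of the F{\o}lner condition completely transparent; its one nontrivial input is the existence of nonnegative trigonometric-polynomial approximate identities on a general compact abelian group (standard, and immediate for $G=\bbT^d$ or $\bbT^\infty$ via products of Fej\'er kernels), which is needed to get $\symb_k\to\symb$ in $L^2$ with $\norm{\symb_k}_\infty\le\norm{\symb}_\infty$. The paper's operator-theoretic route costs the extra Proposition~\ref{lma.b3}, but the single estimate $\norm{\pi_N L(I-\pi_N)}_{\Sch_2}^2=o(\#\sigma_N)$ it relies on is reusable: it immediately yields the singular-value variant (Theorem~\ref{thm.a3}) and fits the $C^*$-algebra/symbol-map framework of SeLegue--Arveson, which your power-trace computation does not give as directly.
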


%%%%%%%%%%%%%%%%%%%%%%%%%%%
\subsection{Multi-dimensional Toeplitz operators}
%%%%%%%%%%%%%%%%%%%%%%%%%%%

Let us briefly discuss an important particular case of the previous theorem: 
$G=\bbT^d$, $d>1$. Here $\Gamma=\bbZ^d$, and for 
$\sigma\subset \bbZ_+^d$, the matrix of $T_\sigma(\varphi)$
is a finite section of the multi-dimensional Toeplitz operator
$$
T(\symb)=\{\wh \symb(j-k)\}_{j,k\in\bbZ_+^d} \quad \text{ on $\ell^2(\bbZ_+^d)$.}
$$
In this context, we would like to mention the following multi-dimensional 
extension of Example~\ref{ex1}. 

\begin{example}\label{exa11}
Let $\Gamma'\subset \bbZ^d$ be a subgroup, and let $\{\sigma_N\}_{N=1}^\infty$
be a F{\o}lner sequence in $\Gamma'$ (i.e. $\sigma_N\subset \Gamma'$ and 
\eqref{a4} holds for every $\alpha\in \Gamma'$). 
Then it is easy to check the following modified spectral density formula:
\[
\lim_{N\to\infty}\frac1{\#\sigma_N}\Tr f(T_{\sigma_N}(\varphi))=\int_{\bbT^d}f\circ\varphi_{\Gamma'}dm_d.
\label{a6}
\]
Here $\varphi_{\Gamma'}$ is a ``projection of $\varphi$ onto $\Gamma'$", 
$$
\varphi_{\Gamma'}(z)=\int_{(\Gamma')^\perp}\varphi(z\zeta)dm_{(\Gamma')^\perp}(\zeta),
$$
where
$(\Gamma')^\perp=\{\zeta\in \bbT^d: \gamma(\zeta)=1 \quad \forall \gamma\in\Gamma'\}$ 
stands for the group of trivial characters on $\Gamma'$.  
Indeed, for every $\varphi\in L^\infty(\bbT^d)$, the function 
$\varphi_{\Gamma'}$ is invariant under the action of $(\Gamma')^\perp$, 
i.e. $\varphi_{\Gamma'}(z\zeta)=\varphi_{\Gamma'}(z)$ for every $z\in\bbZ^d$ 
and $\zeta\in(\Gamma')^{\perp}$, so that $\varphi_{\Gamma'}$ is, in fact, defined on the quotient group 
$\bbT^d/(\Gamma')^\perp=\widehat{(\Gamma')}$ (and the same is true for $f\circ \varphi_{\Gamma'}$). 
Moreover, 
$$
\int_{\widehat{(\Gamma')}} f\circ\varphi_{\Gamma'}dm_{\widehat{(\Gamma')}}
=
\int_{\bbT^d} f\circ \varphi_{\Gamma'} dm_d
$$
(see, for example, \cite[Section 30]{We1940}). 
Applying Theorem~\ref{thm.bedos} to $\varphi_{\Gamma'}$ and $\{\sigma_N\}_{N=1}^\infty$, we get \eqref{a6}.

Observe also that if $\symb$ does not depend on the variables $\zeta\in (\Gamma')^\perp$, i.e. 
$$
\symb(z\zeta)=\symb(\zeta), \quad \forall \zeta\in (\Gamma')^\perp,
$$
then $\varphi_{\Gamma'}=\varphi$, and so the right hand side in \eqref{a6} coincides
with the standard expression.

Using Lemma~\ref{lma.a6}, it is easy to modify this example to obtain 
a (non-F{\o}lner) sequence $\sigma_N$ which satisfies \eqref{a6} and also $\sigma_N\nearrow\bbZ_+^d$. 
\end{example}

%%%%%%%%%%%%%%%%%%%%%%%%%%%%%%%%%
%%%%%%%%%%%%%%%%%%%%%%%%%%%%%%%%%
\section{Multiplicative Toeplitz operators}\label{sec.aa}
%%%%%%%%%%%%%%%%%%%%%%%%%%%%%%%%%
%%%%%%%%%%%%%%%%%%%%%%%%%%%%%%%%%

%%%%%%%%%%%%%%%%%%%%%%%%%%%%%%%%%
\subsection{The infinite multi-torus $\bbT^\infty$}
%%%%%%%%%%%%%%%%%%%%%%%%%%%%%%%%%
Let $\bbT^\infty$ be the Cartesian product of countably many copies of $\bbT$,
equipped with the product topology. 
We will denote elements of $\bbT^\infty$ by $z=(z_1,z_2,\dots)$, where
$z_j\in\bbT$ for each $j$. 
Clearly, $\bbT^\infty$ is a compact Abelian group, 
and therefore there exists a unique Haar measure $\bm$ on $\bbT^\infty$. 
We will need the spaces $L^\infty(\bbT^\infty)$ and $L^2(\bbT^\infty)$, considered
with respect to this measure. 

Let $\bbZ^{(\infty)}$ be the set of all multi-indices $\alpha=(\alpha_1,\alpha_2,\dots)$, 
such that $\alpha_j\in\bbZ$ for each $j$ and $\alpha_j=0$ for all but finitely many 
indices $j$. The subset $\bbZ_+^{(\infty)}\subset \bbZ^{(\infty)}$ 
is defined by the additional condition $\alpha_j\geq0$ for all $j$.  
For $z\in \bbT^\infty$ and $\alpha\in\bbZ^{(\infty)}$ we will use the 
``multipower" notation $z^\alpha$ for the product
$$
z^\alpha=z_1^{\alpha_1}z_2^{\alpha_2}\cdots\, .
$$
We also denote by $\be_\alpha$ the corresponding function on $\bbT^\infty$, $\be_\alpha(z)=z^\alpha$. 
With this notation, $\{\be_\alpha\}_{\alpha\in\bbZ^{(\infty)}}$ is a complete orthonormal
set in $L^2(\bbT^\infty)$. 

Let $p_1,p_2,\dots$ be the ordered sequence of all primes. 
Every natural number $n$ can be written uniquely as a product 
\[
n=p^\alpha:=p_1^{\alpha_1}p_2^{\alpha_2}\cdots,\quad \alpha=\alpha(n)\in\bbZ_+^{(\infty)}\, ,
\label{mp}
\]
and similarly every
positive rational $q$ can be written uniquely as $q=p^\alpha$ for $\alpha=\alpha(q)\in \bbZ^{(\infty)}$. 

For $\bsymb\in L^2(\bbT^\infty)$, its Fourier coefficients are the coordinates with respect to the orthonormal basis 
$\{\be_\alpha\}_{\alpha\in\bbZ^{(\infty)}}$, i.e. 
$$
\jap{\bsymb, \be_\alpha}_{L^2(\bbT^\infty)}=\int_{\bbT^\infty}\bsymb(z)\overline{z^{\alpha}}d\bm(z)\, ,
\quad \alpha\in \bbZ^{(\infty)}\, . 
$$
One can label these Fourier coefficients either by 
multi-indices $\alpha\in \bbZ^{(\infty)}$, or by positive rationals $q=p^\alpha$. 
We choose the latter option and denote
$$
\wh\bsymb(p^\alpha)=\jap{\bsymb, \be_\alpha}_{L^2(\bbT^\infty)},
\quad 
\alpha\in\bbZ^{(\infty)}\, . 
$$

%%%%%%%%%%%%%%%%%%%%%%%%%%%%%
\subsection{Bohr's lift}
%%%%%%%%%%%%%%%%%%%%%%%%%%%%%

\emph{Bohr's lift} $B$ is a linear one-to-one correspondence between 
(appropriate spaces of) almost-periodic functions on $\bbR$ and functions on $\bbT^\infty$. 
For almost-periodic finite linear combinations
$$
f(t)=\sum_{\alpha}c_\alpha p^{it\alpha}, \quad p^{it\alpha}:=(p^\alpha)^{it},
$$
the Bohr's lift $Bf$ is defined as the function on $\bbT^\infty$, given by 
$$
Bf(z)=\sum_{\alpha} c_\alpha z^\alpha, \quad z\in \bbT^\infty.
$$
In other words, the inverse $B^{-1}$ is defined on polynomials $\bsymb$ by 
$$
B^{-1}\bsymb(t)=\bsymb(p(it)), \quad \text{ where }p(it)=(p_1^{it}, p_2^{it}, \dots).
$$
It is important that $B$ is not only linear, but also multiplicative
(i.e. it maps products to products). 
By Bohr's lemma (see e.g. \cite[Theorem 6.5.1]{QQ}), 
we have
\[
\lim_{T\to\infty}\frac1{2T}\int_{-T}^T f(t)dt
=
\int_{\bbT^\infty} Bf(z)d\bm(z). 
\label{kr}
\]
Applying this to $\abs{f}^2$ in place of $f$, we see that Bohr's lift extends to a unitary map 
$$
B: AP^2(S)\to L^2(\bbT^\infty);
$$
here $AP^2(S)$ is the Besicovitch $L^2$-space of almost periodic functions
with Fourier spectrum in the set 
$$
S=\bigl\{\sum_j \alpha_j \log p_j: \alpha\in \bbZ^{(\infty)}\bigr\}.
$$

%%%%%%%%%%%%%%%%%%%%%%%%%%%%%%%%%
\subsection{Multipicative Toeplitz operators}
%%%%%%%%%%%%%%%%%%%%%%%%%%%%%%%%%
Let $\bsymb\in L^\infty(\bbT^\infty)$ be a complex-valued function. 
The \emph{multliplicative Toeplitz operator with symbol $\bsymb$}
is defined by 
$$
\bT(\bsymb)=\{\wh\bsymb(j/k)\}_{j,k\in\bbN}
\quad
\text{ in $\ell^2(\bbN)$.}
$$
Multiplicative Toeplitz operators are of some interest in multiplicative number theory, 
see e.g. \cite{Hilberdink} and references therein; 
this is mainly due to their connection with the Dirichlet convolution. 
Historically, multiplicative Toeplitz operators have appeared as early as in the 1938 paper \cite{Toeplitz}
by Toeplitz (so the name ``multiplicative Toeplitz operators" is not entirely inappropriate!). 
But the paper \cite{Toeplitz} seems to be little known even to experts in the area. 
In more recent time, the study of multiplicative Toeplitz matrices has been revived and put firmly into the context
of functional analysis in \cite{HLS}.

Similarly to the additive case, we have the relation
$$
\wh\bsymb(p^\alpha/p^\beta)=\jap{\bsymb\cdot \be_\alpha,\be_\beta}_{L^2(\bbT^\infty)}, 
\quad
\alpha,\beta\in\bbZ_+^{(\infty)}.
$$
It follows that $\bT(\bsymb)$ is bounded in $\ell^2(\bbN)$ if $\bsymb$ is bounded and, as in the classical ``additive'' case, 
$$
\norm{\bT(\bsymb)}=\norm{\bsymb}_{L^\infty(\bbT^\infty)}.
$$
Conversely, if an infinite matrix $\{\mathbf c(j/k)\}_{j,k\in\bbN}$ is bounded on $\ell^2(\bbN)$, then 
necessarily $\mathbf c=\wh\bsymb$ for a (unique) symbol $\bsymb\in L^\infty(\bbT^\infty)$. 
These facts were proven in \cite{Toeplitz} in the language of almost-periodic functions.

Furthermore,  if $\bsymb$ is real-valued, then 
$$
\wh\bsymb(1/q)=\overline{\wh\bsymb(q)}, \quad q\in\bbQ_+,
$$
and so $\bT(\bsymb)$ is self-adjoint.

%%%%%%%%%%%%%%%%%%%%%%%%%%%%
\subsection{The First Szeg\H{o} Limit Theorem}
%%%%%%%%%%%%%%%%%%%%%%%%%%%%

Let $\sigma\subset\bbN$ be a finite set. 
We denote by $\bT_\sigma(\bsymb)$ the truncation of 
$\bT(\bsymb)$ corresponding to the indices being restricted onto the subset $\sigma$: 
$$
\bT_\sigma(\bsymb)=\{\wh\bsymb(j/k)\}_{j,k\in\sigma}. 
$$
We will say that a sequence $\{\sigma_N\}_{N=1}^\infty$ of subsets of $\bbN$ is a 
\emph{multiplicative F{\o}lner sequence}, if for any $n\in\bbN$, 
\[
\frac{\#\{k\in\sigma_N: nk\in \sigma_N\}}{\#\sigma_N}
\to 1
\quad 
\text{ as $N\to\infty$.}
\label{a5a}
\]
Here we have the multiplicative semigroup $\bbN$ acting on itself. 

As mentioned in the introduction, the following result is a particular case of Theorem~\ref{thm.bedos},
although this particular case is not explicitly discussed in \cite{Bedos}. 

%%%%%%%%%%%%%%%
\begin{theorem}\label{thm.a2}
%%%%%%%%%%%%%%%
Let $\bsymb\in L^\infty(\bbT^\infty)$ be a real-valued function, and let 
$\{\sigma_N\}_{N=1}^\infty$ 
be a multiplicative F{\o}lner sequence. 
Then for any function $f$, continuous on 
$[\ess\inf\bsymb,\ess\sup\bsymb]$,
we have
\[
\lim_{N\to\infty}
\frac1{\#\sigma_N}\Tr f(\bT_{\sigma_N}(\bsymb))
=
\int_{\bbT^\infty}f(\bsymb(z))d\bm(z).
\label{a5}
\]
\end{theorem}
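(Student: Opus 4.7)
The plan is to deduce Theorem~\ref{thm.a2} from B\'edos's general result, Theorem~\ref{thm.bedos}, via the prime-factorisation bijection $\bbN \to \bbZ_+^{(\infty)}$, $n \mapsto \alpha(n)$ coming from \eqref{mp}. Here $\bbT^\infty$ plays the role of $G$, and its character group $\Gamma = \bbZ^{(\infty)}$ contains $\bbN$ as the positive cone $\bbZ_+^{(\infty)}$. Since $j/k = p^{\alpha(j)-\alpha(k)}$, we have
$$
\wh\bsymb(j/k) = \jap{\bsymb\cdot \be_{\alpha(j)},\be_{\alpha(k)}}_{L^2(\bbT^\infty)},
$$
so the basis change $\be_n \mapsto \be_{\alpha(n)}$ identifies the multiplicative truncation $\bT_{\sigma_N}(\bsymb)$ with the group-Toeplitz truncation $T_{\wt\sigma_N}(\bsymb)$ appearing in Theorem~\ref{thm.bedos}, where $\wt\sigma_N := \{\alpha(n) : n\in\sigma_N\}\subset\bbZ_+^{(\infty)}$.

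The second step is to translate the multiplicative F\o lner condition \eqref{a5a} on $\{\sigma_N\}$ into the (two-sided, group-theoretic) additive F\o lner condition \eqref{a4} on $\{\wt\sigma_N\}$ inside $\Gamma$. For $\alpha = \alpha(n)$ with $n \in \bbN$, i.e.\ $\alpha \in \bbZ_+^{(\infty)}$, this is a tautology. Two elementary observations then handle arbitrary $\alpha \in \bbZ^{(\infty)}$: the substitution $k \mapsto k+\alpha$ makes the F\o lner ratio for shift $\alpha$ equal to that for shift $-\alpha$; and if the condition holds for $\alpha_1$ and $\alpha_2$, then it holds for $\alpha_1+\alpha_2$, since
$$
\{k\in\wt\sigma_N : k+\alpha_1\in\wt\sigma_N\} \cap \bigl(\{k\in\wt\sigma_N:k+\alpha_2\in\wt\sigma_N\}-\alpha_1\bigr)
$$
is contained in $\{k\in\wt\sigma_N:k+\alpha_1+\alpha_2\in\wt\sigma_N\}$, and an intersection of finitely many asymptotically full subsets of $\wt\sigma_N$ is itself asymptotically full. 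Writing a general $\alpha$ as $\alpha_+-\alpha_-$ with $\alpha_\pm\in\bbZ_+^{(\infty)}$ completes the upgrade. At this point Theorem~\ref{thm.bedos} delivers \eqref{a5} immediately.

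The main obstacle I would anticipate is precisely this step-2 bookkeeping: the multiplicative F\o lner condition as stated only sees \emph{positive} shifts $n\in\bbN$, so one must verify that the negative multi-indices in $\bbZ^{(\infty)}\setminus\bbZ_+^{(\infty)}$, which are invisible from the semigroup viewpoint, contribute no new constraints. For a self-contained argument (as announced for the appendix), the natural route is to mirror the classical Szeg\H o proof: first establish the identity for $f(x)=x^m$ and $\bsymb$ a trigonometric polynomial by expanding $\Tr(\bT_{\sigma_N}(\bsymb))^m$ as a sum over closed $m$-chains $(k_1,\dots,k_m,k_1)$ in $\sigma_N$ whose successive ratios realise a zero-sum tuple of Fourier frequencies, count these chains using \eqref{a5a}, then pass to general $\bsymb\in L^\infty(\bbT^\infty)$ via the uniform bound $\norm{\bT_{\sigma_N}(\bsymb)}\leq\norm{\bsymb}_{L^\infty(\bbT^\infty)}$ together with Hilbert--Schmidt estimates in the spirit of \eqref{a7}, and finally to continuous $f$ by Weierstrass approximation on $[\ess\inf\bsymb,\ess\sup\bsymb]$.
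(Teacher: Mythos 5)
Your main route---identifying $\bbN$ with $\bbZ_+^{(\infty)}\subset\Gamma=\bbZ^{(\infty)}=\wh{\bbT^\infty}$ via prime factorisation and invoking Theorem~\ref{thm.bedos}---is sound, and it is in fact the reduction the paper asserts is possible ("a particular case of \cite[Theorem 11]{Bedos}") but does not write out; the paper's own proof, given in the appendix, is instead self-contained: it introduces the Laurent operator $\bL(\bsymb)$ on $\ell^2(\bbQ_+)$, proves the off-diagonal Hilbert--Schmidt decay $\norm{\pi_N\bL(\bsymb)(I-\pi_N)}_{\Sch_2}^2=o(\#\sigma_N)$ (Lemma~\ref{lma.b2}), compares $\Tr\bigl(\pi_N f(\bL)\pi_N\bigr)$ with $\Tr f(\pi_N\bL\pi_N)$ via the Laptev--Safarov-type bound of Proposition~\ref{lma.b3}, and then passes from polynomials to continuous $f$ by two-sided polynomial approximation. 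The key technical content is the same in both: the multiplicative F{\o}lner hypothesis only controls shifts by $n\in\bbN$, and one must upgrade it to the whole group $\bbQ_+\cong\bbZ^{(\infty)}$; this is exactly the paper's Lemma~\ref{lma.b2a}, proved there by a counting argument for $n=a/b$ that is essentially equivalent to your two observations (symmetry of the ratio under $\alpha\mapsto-\alpha$, and stability under addition of shifts). One small imprecision in your write-up: the translate $\{k\in\wt\sigma_N:k+\alpha_2\in\wt\sigma_N\}-\alpha_1$ is not a subset of $\wt\sigma_N$, so you cannot literally quote "an intersection of asymptotically full subsets is asymptotically full"; the fix is the one-line count that $k\mapsto k+\alpha_1$ maps $\{k\in\wt\sigma_N:k+\alpha_1\in\wt\sigma_N\}$ injectively into $\wt\sigma_N$, so at most $\#\{k'\in\wt\sigma_N:k'+\alpha_2\notin\wt\sigma_N\}$ of these images miss the second set---precisely the style of estimate in Lemma~\ref{lma.b2a}. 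Your fallback sketch (moment/chain counting for polynomial symbols, then density) would also work and is closer to the classical combinatorial proof of Szeg\H{o}'s theorem, but note that the paper's operator-theoretic route handles arbitrary real $\bsymb\in L^\infty(\bbT^\infty)$ directly, without an intermediate approximation of the symbol by trigonometric polynomials; only $f$ is approximated.
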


%%%%%%%%%%%%%%%%%
\begin{corollary}\label{cr.4}
%%%%%%%%%%%%%%%%%
If $\lambda_-,\lambda_+\in\bbR$ are such that $\lambda_-<\lambda_+$ and 
\[
\bm(\bsymb^{-1}(\{\lambda_-\}))
=
\bm(\bsymb^{-1}(\{\lambda_+\}))
=0,
\label{a5b}
\]
then for $\Delta=(\lambda_-,\lambda_+)$ we have
\[
\lim_{N\to\infty}
\frac{\#\{k: \lambda_k(\bT_{\sigma_N}(\bsymb))\in\Delta\}}{\#\sigma_N}
=
\bm(\{z\in\bbT^\infty: \bsymb(z)\in\Delta\})
\label{a5c}
\]
where $\{\lambda_k(\bT_{\sigma_N}(\bsymb))\}_{k=1}^{\#{\sigma_N}}$ 
are the eigenvalues of $\bT_{\sigma_N}(\bsymb)$.
\end{corollary}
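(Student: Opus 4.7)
The plan is to deduce Corollary~\ref{cr.4} from Theorem~\ref{thm.a2} by a standard sandwiching argument, in exact parallel with the classical passage from Theorem~\ref{thm.a1} to Corollary~\ref{cr.a2}. The key observation is that, since $\bT_{\sigma_N}(\bsymb)$ is self-adjoint, the spectral theorem rewrites the left-hand side of \eqref{a5c} as
$$
\frac{1}{\#\sigma_N}\Tr \chi_\Delta(\bT_{\sigma_N}(\bsymb)),
$$
so the corollary is just \eqref{a5} applied with the discontinuous function $f=\chi_\Delta$. Hypothesis \eqref{a5b} is precisely what is needed to control the discontinuities at the endpoints $\lambda_\pm$.

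First I would fix $\eps>0$ and choose continuous functions $f_\eps^-,f_\eps^+:\bbR\to[0,1]$ with $f_\eps^-\leq \chi_\Delta\leq f_\eps^+$, where $f_\eps^+$ equals $1$ on $[\lambda_-,\lambda_+]$ and vanishes outside $(\lambda_--\eps,\lambda_++\eps)$, while $f_\eps^-$ equals $1$ on $[\lambda_-+\eps,\lambda_+-\eps]$ and vanishes outside $\Delta$. By functional calculus and positivity of the trace on nonnegative selfadjoint matrices,
$$
\Tr f_\eps^-(\bT_{\sigma_N}(\bsymb))\leq \Tr \chi_\Delta(\bT_{\sigma_N}(\bsymb))\leq \Tr f_\eps^+(\bT_{\sigma_N}(\bsymb)).
$$
Dividing by $\#\sigma_N$ and applying Theorem~\ref{thm.a2} to the two continuous functions $f_\eps^\pm$ yields
$$
\int_{\bbT^\infty}\!\! f_\eps^-\circ\bsymb\,d\bm\leq \liminf_{N\to\infty}\frac{\Tr \chi_\Delta(\bT_{\sigma_N}(\bsymb))}{\#\sigma_N}\leq \limsup_{N\to\infty}\frac{\Tr \chi_\Delta(\bT_{\sigma_N}(\bsymb))}{\#\sigma_N}\leq \int_{\bbT^\infty}\!\! f_\eps^+\circ\bsymb\,d\bm.
$$

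Finally, letting $\eps\downarrow 0$, both $f_\eps^\pm(\bsymb(z))$ converge pointwise to $\chi_\Delta(\bsymb(z))$ for every $z$ with $\bsymb(z)\notin\{\lambda_-,\lambda_+\}$. Assumption \eqref{a5b} says exactly that this exceptional set has $\bm$-measure zero, so by dominated convergence (with the trivial bound $1$) both bounding integrals tend to the common value $\bm(\{z:\bsymb(z)\in\Delta\})$, squeezing the liminf and limsup and giving \eqref{a5c}.

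The only mild obstacle is that $\chi_\Delta$ is not continuous, so Theorem~\ref{thm.a2} cannot be invoked directly; the sandwich construction above is exactly what circumvents this, and it is the sole purpose of hypothesis \eqref{a5b}. All other ingredients are entirely soft, so I do not anticipate any genuine difficulty.
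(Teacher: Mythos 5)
Your proof is correct and is exactly the ``standard methods'' argument the paper alludes to (it gives no detailed proof of this corollary, only the remark that passing from continuous $f$ to $\chi_\Delta$ uses the endpoint condition \eqref{a5b}): rewrite the eigenvalue count as $\Tr\chi_\Delta(\bT_{\sigma_N}(\bsymb))$, squeeze $\chi_\Delta$ between continuous functions, apply Theorem~\ref{thm.a2}, and let $\eps\downarrow0$ using \eqref{a5b}. No gaps.
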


%%%%%%%%%%%%%%%%%%%%%%%%%%%%%%%%%
\subsection{Multiplicative F{\o}lner and non-F{\o}lner sequences}
%%%%%%%%%%%%%%%%%%%%%%%%%%%%%%%%%
\mbox{} \newline

\textbf{(a)}
The most ``natural'' sequence of sets
\[
\sigma_N=\{1,2,\dots,N\}
\label{a6a}
\]
is NOT multiplicative F{\o}lner!
Indeed, it is easy to see that for all $n\in\bbN$
$$
\frac{\#\{k\in\sigma_N: nk\in \sigma_N\}}{\#\sigma_N}\to \frac1n, \quad N\to\infty.
$$
In \cite{Balazard}, the author proves the \emph{existence} of the limit 
in the left hand side of \eqref{a5} for $\sigma_N=\{1,2,\dots,N\}$ 
when $f(x)=\log x$ and $\wh\bsymb$ has some rather special arithmetic properties. 
This limit is not evaluated in \cite{Balazard}.

\textbf{(b)}
Let 
$$
\sigma_N=\{p^\alpha: 0\leq \alpha_j\leq A_j^{(N)}\}, 
$$
where for each $N$, only finitely many coefficients $A_j^{(N)}$ are non-zero, and 
for each $j$, 
$$
A_j^{(N)}\to \infty \text{ as }N\to\infty.
$$
Then it is easy to see that $\sigma_N$ is multiplicative F{\o}lner. 

\textbf{(c)}
More generally, the question of whether $\{\sigma_N\}_{N=1}^\infty$ is multiplicative F{\o}lner 
depends the geometry of the subsets
$$
\Sigma_N=\{\alpha\in \bbZ_+^{(\infty)}: p^\alpha\in\sigma_N\}
$$
of the infinite-dimensional lattice $\bbZ_+^{(\infty)}$
as $N\to\infty$. 
The subsets $\Sigma_N$ for $\sigma_N=\{1,2,\dots,N\}$ behave very irregularly. 

\textbf{(d)}
Let us modify Example~\ref{exa11} to fit this framework. 
Let $d\geq1$; 
for simplicity of notation, we consider the simplest embedding $\iota: \bbZ_+^d\to\bbZ_+^{(\infty)}$, 
realised by 
$$
\iota(j_1,\dots,j_d)=(j_1,\dots,j_d,0,0,\dots). 
$$
We identify $\bbZ_+^d$ with $\iota (\bbZ_+^d)$. 
Let $\sigma_N\subset \bbN$ be such that the corresponding subsets
$\Sigma_N$ satisfy $\Sigma_N\subset \bbZ_+^d$, and 
$\{\Sigma_N\}_{N=1}^\infty$ is an (additive) F{\o}lner sequence for $\bbZ_+^d$.
Then in place of \eqref{a5} we have
$$
\lim_{N\to\infty}
\frac1{\#\sigma_N}\Tr f(\bT_{\sigma_N}(\bsymb))
=
\int_{\bbT^d}f(\bsymb_d(z))dm(z),
$$
where 
$$
\bsymb_d(z)=\int_{\bbT^\infty}\bsymb(z_1,\dots,z_d,\zeta_1,\zeta_2,\dots) d\bm(\zeta).
$$
Using Lemma~\ref{lma.a6}, we can again modify this example so that $ \sigma_N\nearrow\bbN$.

%%%%%%%%%%%%%%%%%%%%%%%%%%%%%%%%%
\subsection{Sharpness of the F{\o}lner condition}
%%%%%%%%%%%%%%%%%%%%%%%%%%%%%%%%%
Exactly as in the additive case, Theorem~\ref{thm.a2} is sharp in the following sense. 
%%%%%%%%%%%%%%%%%%%%
\begin{proposition}
%%%%%%%%%%%%%%%%%%%%
Let $\{\sigma_N\}_{N=1}^\infty$ be a sequence of finite subsets of $\bbN$ such 
that for some $n\in\bbN$, the multiplicative F{\o}lner condition \eqref{a5a} fails. 
Then there exists a real-valued symbol $\bsymb\in C(\bbT^\infty)$ 
such that the conclusion \eqref{a5} of Theorem~\ref{thm.a2} fails for $f(x)=x^2$. 
In fact, one can take $\bsymb(z)=z^\alpha+z^{-\alpha}$, where $n=p^\alpha$. 
\end{proposition}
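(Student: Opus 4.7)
The plan is to mirror the additive argument of Proposition 2.3 almost verbatim, using the multiplicative symbol $\bsymb(z)=z^\alpha+z^{-\alpha}$ with $n=p^\alpha$. First I would compute the Fourier coefficients: $\wh\bsymb(p^\beta)$ equals $1$ if $\beta=\pm\alpha$ (i.e.\ if the positive rational $p^\beta$ equals $n$ or $1/n$) and vanishes otherwise. Consequently $\int_{\bbT^\infty}\bsymb(z)^2\,d\bm(z)=\norm{\bsymb}_{L^2}^2=2$, which is what the right-hand side of \eqref{a5} predicts for $f(x)=x^2$.

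Next I would evaluate the left-hand side by expanding the trace in the standard basis:
\[
\Tr \bT_{\sigma_N}(\bsymb)^2
=\sum_{j,k\in\sigma_N}\wh\bsymb(j/k)\wh\bsymb(k/j)
=\sum_{j,k\in\sigma_N}\abs{\wh\bsymb(j/k)}^2,
\]
since $\bsymb$ is real-valued. Because $\wh\bsymb(j/k)$ is nonzero only when $j/k\in\{n,1/n\}$, the double sum collapses to
\[
\Tr \bT_{\sigma_N}(\bsymb)^2
=\#\{k\in\sigma_N: nk\in\sigma_N\}+\#\{j\in\sigma_N: nj\in\sigma_N\},
\]
and the two counts coincide. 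Therefore
\[
\frac{\Tr \bT_{\sigma_N}(\bsymb)^2}{\#\sigma_N}
=2\,\frac{\#\{k\in\sigma_N: nk\in\sigma_N\}}{\#\sigma_N}.
\]

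By the hypothesis that the multiplicative F{\o}lner condition \eqref{a5a} fails at the chosen $n$, the ratio on the right does not tend to $1$, so the left-hand side of \eqref{a5} does not tend to $2$, contradicting the Szeg\H{o} conclusion. This proves the proposition.

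There is really no serious obstacle here: the only things to check are that $\bsymb$ is continuous on $\bbT^\infty$ (which is trivial, since it is a Laurent polynomial in finitely many variables) and that the symmetry $\wh\bsymb(1/q)=\overline{\wh\bsymb(q)}$ yields identical contributions from the two diagonals $j=nk$ and $k=nj$. The mild conceptual point worth stating explicitly is that choosing $\bsymb$ to have spectrum exactly $\{\alpha,-\alpha\}$ reduces the trace formula to a single Følner ratio, so the failure of \eqref{a5a} at that one $n$ is directly detected by $f(x)=x^2$.
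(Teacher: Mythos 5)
Your proposal is correct and is exactly the argument the paper intends: the paper's proof simply says it is a verbatim translation of the additive Proposition on the sharpness of the F{\o}lner condition to the group $\bbQ_+$, and your write-up carries out precisely that translation (Fourier coefficients supported on $\{n,1/n\}$, trace of the square counting the single F{\o}lner ratio, right-hand side equal to $2$). No gaps; the computation and the symmetry argument for the two diagonal counts are exactly as in the paper's additive proof.
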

\begin{proof} 
The proof is a verbatim translation of the proof of Proposition~\ref{prp.a1b} written for the group $\bbQ_+$. 
\end{proof}

%%%%%%%%%%%%%%%%%%%%%%%%%%%%%%%%%
\subsection{Comments, corollaries and possible extensions}\label{sec.b7}
%%%%%%%%%%%%%%%%%%%%%%%%%%%%%%%%%
\mbox{}\newline

\textbf{(a)}
Using Bohr's lemma \eqref{kr}, one can rewrite the 
conclusion of Theorem~\ref{thm.a2} 
as
$$
\lim_{N\to\infty}\frac1{\#\sigma_N}\Tr f(\bT_{\sigma_N}(\bsymb))
=
\lim_{T\to\infty}
\frac1{2T}
\int_{-T}^T f(\bsymb(p(it)))dt, 
$$
where $p(it)=(p_1^{it}, p_2^{it}, \dots)$.

\textbf{(b)}
Assumption $\bsymb\in L^\infty(\bbT^\infty)$ of 
Theorem~\ref{thm.a2} can be considerably relaxed in the spirit of \cite[Section 2.7.7]{Simon}. 
Furthermore, this theorem also holds true for any complex-valued $\bsymb$, 
if $f$ is a uniform limit of polynomials on the closed convex hull of the range of $\bsymb$. 

\textbf{(c)}
In addition to the previous item, 
we give another non-selfadjoint analogue of Theorem~\ref{thm.a2}. 
Here the symbol $\bsymb$ is not assumed to be real-valued, but instead of the eigenvalues
of $\bT_{\sigma_N}(\bsymb)$ one considers the singular values.

%%%%%%%%%%%%%%%
\begin{theorem}\label{thm.a3}
%%%%%%%%%%%%%%%
Let $\bsymb\in L^\infty(\bbT^\infty)$, and let $\{\sigma_N\}_{N=1}^\infty$ 
be a multiplicative F{\o}lner sequence. 
Then for any function $f$, continuous on $[\ess\inf\abs{\bsymb}^2,\ess\sup\abs{\bsymb}^2]$, 
\[
\lim_{N\to\infty}
\frac1{\#{\sigma_N}}\Tr f(\bT_{\sigma_N}(\bsymb)^*\bT_{\sigma_N}(\bsymb))
=
\int_{\bbT^\infty} f(\abs{\bsymb(z)}^2)d\bm(z)
\label{a8}
\]
as $N\to\infty$. 
\end{theorem}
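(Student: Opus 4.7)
The plan is to reduce Theorem~\ref{thm.a3} to Theorem~\ref{thm.a2} applied to the real-valued symbol $\abs{\bsymb}^2\in L^\infty(\bbT^\infty)$. By Weierstrass approximation on the compact interval $[0,\norm{\bsymb}_\infty^2]$, which contains the spectrum of both operators in play, it suffices to prove \eqref{a8} for monomials $f(x)=x^m$, $m\geq 1$.

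The starting point is the operator identity
\[
\bT_{\sigma_N}(\abs{\bsymb}^2)=\bT_{\sigma_N}(\bsymb)^*\bT_{\sigma_N}(\bsymb)+R_N,\qquad R_N:=Q_N^*Q_N\geq 0,
\]
where $Q_N:=(I-P_{\sigma_N})M(\bsymb)P_{\sigma_N}$. It follows from $M(\bsymb)^*M(\bsymb)=M(\abs{\bsymb}^2)$ by inserting $I=P_{\sigma_N}+(I-P_{\sigma_N})$ between the two factors. Expanding $\bsymb\cdot\be_{\alpha(j)}=\sum_\gamma\wh\bsymb(p^\gamma)\be_{\gamma+\alpha(j)}$ and recognising that $(I-P_{\sigma_N})$ keeps exactly those terms whose index does not correspond to an element of $\sigma_N$, one obtains
\[
\Tr R_N=\norm{Q_N}_{\Sch_2}^2=\sum_{q\in\bbQ_+}\abs{\wh\bsymb(q)}^2\cdot\#\{j\in\sigma_N: qj\notin\sigma_N\},
\]
with the convention that $qj\notin\sigma_N$ when $qj\notin\bbN$.

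The combinatorial core of the argument is the claim that for every fixed $q\in\bbQ_+$,
\[
\frac{\#\{j\in\sigma_N: qj\notin\sigma_N\}}{\#\sigma_N}\to 0\quad\text{as }N\to\infty.
\]
For $q=n\in\bbN$ this is \eqref{a5a} itself. For $q=m/n$ in lowest terms with $n>1$, the constraint $qj\in\sigma_N$ forces $n\mid j$; writing $j=nk$ identifies the count $\#\{j\in\sigma_N: qj\in\sigma_N\}$ with $\#\{k\in\bbN: nk\in\sigma_N\text{ and }mk\in\sigma_N\}$. Two applications of \eqref{a5a} (once with $n$ and once with $m$) together with the inclusion-exclusion bound
$\#(A\cap B)\geq\#\sigma_N-\#(\sigma_N\setminus A)-\#(\sigma_N\setminus B)$ for $A=\{k: nk\in\sigma_N\}$ and $B=\{k: mk\in\sigma_N\}$ force this count to be $(1-o(1))\#\sigma_N$. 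Since $\sum_q\abs{\wh\bsymb(q)}^2=\norm{\bsymb}_{L^2}^2<\infty$ and each summand is bounded by $\#\sigma_N\abs{\wh\bsymb(q)}^2$, dominated convergence yields $\Tr R_N=o(\#\sigma_N)$.

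Setting $A_N=\bT_{\sigma_N}(\abs{\bsymb}^2)$ and $B_N=\bT_{\sigma_N}(\bsymb)^*\bT_{\sigma_N}(\bsymb)$, both have operator norm at most $\norm{\bsymb}_\infty^2$ and $\norm{A_N-B_N}_{\Sch_1}=\Tr R_N=o(\#\sigma_N)$. The telescoping estimate \eqref{a7} from Lemma~\ref{lma.a6} then gives
\[
\Abs{\Tr A_N^m-\Tr B_N^m}\leq m\norm{\bsymb}_\infty^{2(m-1)}\Tr R_N=o(\#\sigma_N),
\]
so the two normalised traces have the same limit; applying Theorem~\ref{thm.a2} to $\abs{\bsymb}^2$ finishes the argument. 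The main obstacle is the combinatorial step above: \eqref{a5a} is stated only for multiplication by positive integers, but the sum in $\Tr R_N$ runs over all of $\bbQ_+$, including denominators for which $qj$ need not be an integer; handling the forward and backward directions symmetrically is the place where one uses that the multiplicative Følner condition is in fact equivalent to $\#(\tau_n^{-1}\sigma_N\triangle\sigma_N)/\#\sigma_N\to 0$ with $\tau_n(k)=nk$.
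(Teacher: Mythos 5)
Your proof is correct and follows essentially the same route as the paper's: your identity $\bT_{\sigma_N}(\abs{\bsymb}^2)-\bT_{\sigma_N}(\bsymb)^*\bT_{\sigma_N}(\bsymb)=Q_N^*Q_N$ is the paper's $\pi_N\bL(\bsymb)^*(I-\pi_N)\bL(\bsymb)\pi_N$, your combinatorial step (extending the F{\o}lner property to ratios $m/n$ via the divisibility substitution and inclusion--exclusion, then dominated convergence) is exactly Lemma~\ref{lma.b2a} combined with Lemma~\ref{lma.b2}, and the trace-norm comparison followed by Theorem~\ref{thm.a2} applied to $\abs{\bsymb}^2$ is the paper's reduction. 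The only differences (working with $M(\bsymb)$ and $P_{\sigma_N}$ on $L^2(\bbT^\infty)$ rather than $\bL(\bsymb)$ and $\pi_N$ on $\ell^2(\bbQ_+)$, and uniform polynomial approximation instead of sandwiching by polynomials $f_\pm$) are cosmetic.
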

It is easy also to give a corollary of this Theorem in the spirit of Corollary~\ref{cr.4}; 
we will not go into details here. 

\textbf{(d)}
We give an example related to the previous theorem. Let $\gamma>1$, and let the symbol $\bsymb$ be such that 
$$
\widehat{\bsymb}(q)=
\begin{cases}
q^{-\gamma}, & q\in \bbN,
\\
0, & q\in\bbQ_+\setminus\bbN. 
\end{cases}
$$
In other words, using Bohr's lift, we have
$$
B^{-1}\bsymb(t)=\zeta(\gamma+it), \quad t\in\bbR,
$$
where $\zeta$ is the Riemann Zeta function. Then, combining the previous theorem
with Bohr's lemma \eqref{kr}, we get for every $m\in \bbN$ and for any multiplicative 
F{\o}lner sequence $\sigma_N$, 
$$
\lim_{N\to\infty}
\frac1{\#{\sigma_N}}\Tr (\bT_{\sigma_N}(\bsymb)^*\bT_{\sigma_N}(\bsymb))^m
=
\lim_{T\to\infty}\frac1{2T}\int_{-T}^T\abs{\zeta(\gamma+it)}^{2m}dt. 
$$
Observe that it is unknown whether the limit in the right hand side exists for $1/2<\gamma<1$
and $m>2$; this question is  related to deep unsolved problems in number theory, 
see e.g. \cite{Tit1986,Ram1995}.

\textbf{(e)}
Theorem~\ref{thm.a2} also implies the following corollary, an analogue of \eqref{a3}: 
\emph{If $\bsymb\in L^\infty(\bbT^\infty)$ is real and positive with 
$\inf_{\bbT^\infty}\bsymb>0$, and $\{\sigma_N\}_{N=1}^\infty$ is a multiplicative F{\o}lner sequence, 
then }
\[
\lim_{N\to\infty}(\det \bT_{\sigma_N}(\bsymb))^{1/\#\sigma_N}
=
\exp\biggl(\int_{\bbT^\infty}\log\bsymb(z)d\bm(z)\biggr).
\label{a9}
\]
Without the condition $\inf_{\bbT^\infty}\bsymb>0$, we can only assert the inequality
$$
\limsup_{N\to\infty}(\det \bT_{\sigma_N}(\bsymb))^{1/\#\sigma_N}
\leq
\exp\biggl(\int_{\bbT^\infty}\log\bsymb(z)d\bm(z)\biggr).
$$
For additive Toeplitz matrices the full version \eqref{a3} is available for 
general integrable $\symb\geq0$, which was proven initially by Szeg\H{o} \cite{Szego}
(\cite{Szego1} for positive continuous functions) and then extended by S.~Verblunsky \cite{Ver1936}
to measure generated Toeplitz matrices $T_N(\mu)$ instead of $T_N(\varphi)$; 
see  \cite{Simon} for the whole story and for at least seven other proofs of this limit formula. 
However, all of these proofs use tools from holomorphic $H^p(\bbT)$ theory, 
whose analogues for $\bbT^\infty$ are at present unknown.

\textbf{(f)}
\emph{Systems of dilated functions.} 
Let $f\in L^2(0,1)$ be given by the orthogonal series
$$
f(x)= \sum_{n=1}^\infty a_n\sqrt{2}\sin (\pi n x). 
$$
One defines
$$
\wt Bf(z)=\sum_{n=1}^\infty a_n z^{\alpha(n)}, \quad z\in \bbT^\infty,
$$
in the sense of $L^2(\bbT^\infty)$ convergence. 
This is a unitary mapping 
$$
\wt B: L^2(0,1)\to H^2(\bbT^\infty), 
$$
closely related to Bohr's lift. 
It transforms the dilation operation 
$$
D_kf(x)=f(kx), \quad k\in\bbN,
$$
into the multiplication by $z^{\alpha(k)}$, 
$$
\wt BD_k=M(z^{\alpha(k)})\wt B. 
$$
The following corollary of \textbf{(e)} above may be useful for the dilation completeness
problem (see e.g. \cite[Section 6.6.6]{Nik2018}). 

%%%%%%%%%%%%%%%
\begin{corollary}\label{cr.a5}
%%%%%%%%%%%%%%%
Let $f\in L^2(0,1)$ and let $f_k=D_k f$, $k\in\bbN$, be the dilated functions. 
Denote by $G$ the Gram matrix $\{\jap{f_j,f_k}_{L^2(0,1)}\}_{j,k\in\bbN}$ and let $G_{\sigma_N}$ be the truncation
$$
G_{\sigma_N}=\{\jap{f_j,f_k}_{L^2(0,1)}\}_{j,k\in\sigma_N}, 
$$
where $\{\sigma_N\}_{N=1}^\infty$ is a multiplicative F{\o}lner sequence. 
Assume that  $\wt Bf\in L^\infty(\bbT^\infty)$ and 
$\inf_{\bbT^\infty} \abs{\wt Bf}>0$. 
Then 
$$
\lim_{N\to\infty}(\det G_{\sigma_N})^{1/\#\sigma_N}
=
\exp\biggl(\int_{\bbT^\infty}\log \abs{\wt Bf(z)}^2d\bm(z)\biggr).
$$
\end{corollary}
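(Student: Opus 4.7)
The plan is to identify the Gram matrix $G$ as a multiplicative Toeplitz matrix with a suitable symbol, and then reduce the statement directly to the Szeg\H{o}-type formula \eqref{a9} already recorded in item \textbf{(e)} of Section~\ref{sec.b7}.

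First, using the unitarity of $\wt B$ together with the intertwining relation $\wt BD_k=M(z^{\alpha(k)})\wt B$, compute
$$
\jap{f_j,f_k}_{L^2(0,1)}
=\jap{\wt Bf_j,\wt Bf_k}_{L^2(\bbT^\infty)}
=\int_{\bbT^\infty}\abs{\wt Bf(z)}^2\,z^{\alpha(j)-\alpha(k)}\,d\bm(z).
$$
Set $\bsymb=\abs{\wt Bf}^2$. Since $\alpha(j)-\alpha(k)=\alpha(j/k)\in\bbZ^{(\infty)}$ (this is precisely the multiplicativity of the prime-exponent map $n\mapsto \alpha(n)$ combined with the definition \eqref{mp}), the right hand side equals $\wh\bsymb(k/j)$. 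Thus $G_{jk}=\wh\bsymb(k/j)$, so for every finite $\sigma_N$ the truncation $G_{\sigma_N}$ is the transpose of $\bT_{\sigma_N}(\bsymb)$, and in particular $\det G_{\sigma_N}=\det\bT_{\sigma_N}(\bsymb)$.

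Next, verify that $\bsymb=\abs{\wt Bf}^2$ satisfies the hypotheses of \eqref{a9}: it is real and non-negative (a modulus squared), it lies in $L^\infty(\bbT^\infty)$ since $\wt Bf\in L^\infty(\bbT^\infty)$, and it is bounded away from zero on $\bbT^\infty$ by the assumption $\inf_{\bbT^\infty}\abs{\wt Bf}>0$. Applying \eqref{a9} to $\bsymb$ along the multiplicative F\o lner sequence $\{\sigma_N\}$ and using the identity $\det G_{\sigma_N}=\det\bT_{\sigma_N}(\bsymb)$ gives
$$
\lim_{N\to\infty}\bigl(\det G_{\sigma_N}\bigr)^{1/\#\sigma_N}
=\exp\biggl(\int_{\bbT^\infty}\log\bsymb(z)\,d\bm(z)\biggr)
=\exp\biggl(\int_{\bbT^\infty}\log\abs{\wt Bf(z)}^2\,d\bm(z)\biggr),
$$
which is the claimed identity.

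There is no genuine obstacle to this argument; the whole point is that once the Gram matrix of dilates is recognised as a multiplicative Toeplitz matrix, the corollary is an immediate consequence of \eqref{a9}. All the substantive difficulty is concentrated upstream, in the proof of \eqref{a9} itself (which, as the authors note in item \textbf{(e)}, rests on tools from $H^p$-theory on $\bbT$ and currently cannot be extended beyond the $\inf\bsymb>0$ setting because the corresponding $H^p(\bbT^\infty)$-machinery is not available).
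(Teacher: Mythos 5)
Your proposal is correct and is essentially the paper's own argument: you compute, via the unitarity of $\wt B$ and the intertwining relation $\wt B D_k = M(z^{\alpha(k)})\wt B$, that the Gram matrix is the multiplicative Toeplitz matrix with symbol $\bsymb=\abs{\wt Bf}^2$ (up to transposition, which does not affect determinants), and then invoke \eqref{a9} from item \textbf{(e)}. The paper does exactly the same reduction, so there is nothing to add.
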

\begin{proof}
Indeed, $G$ is a multiplicative Toeplitz matrix, 
\begin{align*}
\jap{f_j,f_k}_{L^2(0,1)}
&=
\jap{D_jf,D_kf}_{L^2(0,1)}
=
\jap{\wt B D_j f,\wt B D_k f}_{L^2(\bbT^\infty)}
\\
&=
\jap{M(z^{\alpha(j)})\wt B f, M(z^{\alpha(k)})\wt B f}_{L^2(\bbT^\infty)}
=
\jap{\abs{\wt B f}^2, z^{\alpha(k)-\alpha(j)}}_{L^2(\bbT^\infty)}
\\
&=
\wh\bsymb(k/j),
\end{align*}
where $\bsymb=\abs{\wt B f}^2$.
The rest follows from item \textbf{(e)} above. 
\end{proof}

%%%%%%%%%%%%%%%%%%%%%%%%%
\subsection{Two open problems}\label{sec.op}
%%%%%%%%%%%%%%%%%%%%%%%%%
\mbox{}\newline

\textbf{(a)}
Let $\sigma_N=\{1,\dots,N\}$; does the limit in \eqref{a5} exist?
Can it be expressed in a closed form in terms of $f$ and $\bsymb$? 

\textbf{(b)}
Let $\bsymb\geq0$, $\bsymb\in L^1(\bbT^\infty)$. 
Does the relation \eqref{a9} necessarily hold in this generality?
Furthermore, does it hold (similarly to the additive case) for multiplicative Toeplitz matrices of the form 
$\{\wh\bmu(j/k)\}_{j,k=1}^\infty$, where $\bmu$ is a finite measure on $\bbT^\infty$ 
with $d\bmu=\bsymb d\bm+\bmu_{\text{sing}}$?

\appendix
%%%%%%%%%%%%%%%%%%%%%%%%%%%%%%%%%%%%%%%%%
%%%%%%%%%%%%%%%%%%%%%%%%%%%%%%%%%%%%%%%%%
\section{Some proofs}\label{sec.b}
%%%%%%%%%%%%%%%%%%%%%%%%%%%%%%%%%%%%%%%%%
%%%%%%%%%%%%%%%%%%%%%%%%%%%%%%%%%%%%%%%%%

Here for completeness we give the proofs of Theorem~\ref{thm.a2} and Theorem~\ref{thm.a3}. 
We follow the set of very well known ideas, which have become 
folklore in spectral theory. In particular, our proofs deviate very little from those of \cite{Bedos}. 

%%%%%%%%%%%%%%%%%%%%%%%%%%%%%%%%%%%%%%%%%
\subsection{The Laurent operator $\bL(\bsymb)$}\label{sec.b1}
%%%%%%%%%%%%%%%%%%%%%%%%%%%%%%%%%%%%%%%%%

Along with the multiplicative Toeplitz operator $\bT(\bsymb)$, acting in $\ell^2(\bbN)$, 
we consider the ``multiplicative Laurent operator" 
$$
\bL(\bsymb)=\{\wh\bsymb(q/s)\}_{q,s\in\bbQ_+}\quad \text{ in $\ell^2(\bbQ_+)$.}
$$
(In the additive case, the Laurent operator is $\{\wh\symb(j-k)\}_{j,k\in\bbZ}$ in
$\ell^2(\bbZ)$, see \cite[Section 1.2]{BS}.)
Clearly, $\bL(\bsymb)$ is the matrix of the operator of multiplication by $\bsymb$ 
in the basis $\{\be_\alpha\}_{\alpha\in\bbZ^{(\infty)}}$ in $L^2(\bbT^\infty)$:
$$
\jap{\bsymb\cdot \be_\alpha,\be_\beta}_{L^2(\bbT^{\infty})}
=
\wh \bsymb(p^{\beta-\alpha}), 
\quad
\alpha,\beta\in \bbZ^{(\infty)}.
$$
From here it follows that $\bL(\bsymb)$ is bounded for $\bsymb\in L^\infty(\bbT^\infty)$ and has the following simple properties, which we state here for the ease of further reference:

%%%%%%%%%%%%%%%%%%%
\begin{lemma}\label{lma.b1}
%%%%%%%%%%%%%%%%%%%
\begin{enumerate}[\rm (i)]
\item
If $\bsymb_1,\bsymb_2\in L^\infty(\bbT^\infty)$, then 
$$
\bL(\bsymb_1)\bL(\bsymb_1)=\bL(\bsymb_1 \bsymb_2). 
$$
\item
If $f$ is a polynomial and $\bsymb\in L^\infty(\bbT^\infty)$, then 
$$
f(\bL(\bsymb))=\bL(f\circ \bsymb).
$$
\item
If $\bsymb\in L^\infty(\bbT^\infty)$, then 
$$
\bL(\bsymb)^*=\bL(\overline{\bsymb}).
$$
\end{enumerate}
\end{lemma}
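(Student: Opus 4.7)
The plan is to exploit the fact, already noted in the paragraph preceding the lemma, that $\bL(\bsymb)$ is nothing but the matrix of the multiplication operator $M(\bsymb)$ on $L^2(\bbT^\infty)$ written in the orthonormal basis $\{\be_\alpha\}_{\alpha\in\bbZ^{(\infty)}}$. Equivalently, the map $U:L^2(\bbT^\infty)\to\ell^2(\bbQ_+)$ defined by $U\be_\alpha=\delta_{p^\alpha}$ (the standard basis vector of $\ell^2(\bbQ_+)$ indexed by $p^\alpha$) is unitary, and $U M(\bsymb) U^{-1}=\bL(\bsymb)$. Once this observation is in place, each of (i)--(iii) reduces to a trivial property of multiplication operators on $L^2$.

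I would prove the three items as follows. For (i), since $M(\bsymb_1)M(\bsymb_2)=M(\bsymb_1\bsymb_2)$ pointwise on $L^2(\bbT^\infty)$ (both sides being defined because $\bsymb_1,\bsymb_2\in L^\infty$), conjugating by $U$ gives the desired product formula. For (iii), one checks on the basis $\{\be_\alpha\}$ that $\jap{\overline{\bsymb}\cdot\be_\beta,\be_\alpha}_{L^2(\bbT^\infty)}=\overline{\jap{\bsymb\cdot\be_\alpha,\be_\beta}_{L^2(\bbT^\infty)}}$, which is exactly the statement that the matrix with $(q,s)$-entry $\wh{\overline{\bsymb}}(q/s)$ equals the conjugate-transpose of the matrix with entries $\wh\bsymb(q/s)$. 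Finally, (ii) follows from (i) by induction on the degree of the monomial combined with linearity of $\bsymb\mapsto\bL(\bsymb)$: if $f(x)=\sum_{k=0}^n c_k x^k$, then
\[
f(\bL(\bsymb))=\sum_{k=0}^n c_k\,\bL(\bsymb)^k=\sum_{k=0}^n c_k\,\bL(\bsymb^k)=\bL\bigl(\textstyle\sum_{k=0}^n c_k\bsymb^k\bigr)=\bL(f\circ\bsymb),
\]
where the second equality uses (i) iteratively and the third uses linearity, both of which are immediate from the definition of $\bL$.

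There is no real obstacle; the only thing to be careful about is to record explicitly, either in the opening sentence or as a separate remark, the identification of $\bL(\bsymb)$ with the matrix of $M(\bsymb)$ in the basis $\{\be_\alpha\}$, since everything else then follows mechanically. If one prefers not to invoke the unitary $U$ and to argue purely on the matrix level, (i) can alternatively be verified by the direct computation
\[
\sum_{r\in\bbQ_+}\wh{\bsymb_1}(q/r)\wh{\bsymb_2}(r/s)=\wh{\bsymb_1\bsymb_2}(q/s),
\]
which in turn follows from Parseval applied to the product $(\bsymb_1\be_{\alpha(s)})\cdot\overline{\be_{\alpha(q)}}$ and the convolution identity for Fourier coefficients on the compact abelian group $\bbT^\infty$; convergence of the sum is guaranteed by $\bsymb_1,\bsymb_2\in L^2(\bbT^\infty)$.
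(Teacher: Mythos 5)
Your proposal is correct and follows essentially the same route as the paper, which derives the lemma directly from the observation (stated just before it) that $\bL(\bsymb)$ is the matrix of the multiplication operator $M(\bsymb)$ in the orthonormal basis $\{\be_\alpha\}_{\alpha\in\bbZ^{(\infty)}}$ of $L^2(\bbT^\infty)$, so that (i)--(iii) reduce to the corresponding trivial properties of multiplication operators. (Incidentally, you silently corrected the typo in item (i), where $\bL(\bsymb_1)\bL(\bsymb_1)$ should read $\bL(\bsymb_1)\bL(\bsymb_2)$.)
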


%%%%%%%%%%%%%%%%%%%%%%%%%%%%%%%%%%%%%%%%%
\subsection{Main lemma}\label{sec.b2}
%%%%%%%%%%%%%%%%%%%%%%%%%%%%%%%%%%%%%%%%%
First we need to extend the F{\o}lner condition \eqref{a5a} to all $n\in\bbQ_+$. 
%%%%%%%%%%%%%%%%%%%
\begin{lemma}\label{lma.b2a}
%%%%%%%%%%%%%%%%%%%
Let $\{\sigma_N\}_{N=1}^\infty$ be a multiplicative F{\o}lner sequence of finite
subsets of $\bbN$. Then \eqref{a5a} holds for any $n\in\bbQ_+$. 
\end{lemma}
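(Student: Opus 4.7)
The plan is to reduce the rational case $n=a/b \in \bbQ_+$ to the already-assumed integer cases $n=a$ and $n=b$, by using multiplication by $b$ as a change of variable that converts ``$(a/b)k \in \sigma_N$'' into statements about multiplication by integers.

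Fix $n \in \bbQ_+$ and write $n = a/b$ with $a, b \in \bbN$ (not necessarily coprime --- the argument won't care). Denote the set we want to control by
$$
C_N := \{k \in \sigma_N : (a/b)k \in \sigma_N\},
$$
and introduce the two sets that the hypothesis directly controls:
$$
A_N := \{k \in \sigma_N : ak \in \sigma_N\}, \qquad B_N := \{k \in \sigma_N : bk \in \sigma_N\}.
$$
By \eqref{a5a} applied to $n = a$ and $n = b$, both $\#A_N/\#\sigma_N \to 1$ and $\#B_N/\#\sigma_N \to 1$, so by inclusion--exclusion
$$
\frac{\#(A_N \cap B_N)}{\#\sigma_N} \;\geq\; \frac{\#A_N + \#B_N - \#\sigma_N}{\#\sigma_N} \;\to\; 1.
$$

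Now I would set up the injection that closes the argument. Consider the map $k \mapsto bk$ from $A_N \cap B_N$ into $\bbN$: it is obviously injective, and if $k \in A_N \cap B_N$ then $bk \in \sigma_N$ (because $k \in B_N$) and $(a/b)(bk) = ak \in \sigma_N$ (because $k \in A_N$), so $bk \in C_N$. Hence $b(A_N \cap B_N) \subset C_N$, which gives $\#C_N \geq \#(A_N \cap B_N)$, and combined with the trivial bound $\#C_N \leq \#\sigma_N$ one concludes
$$
\frac{\#C_N}{\#\sigma_N} \to 1, \qquad N \to \infty,
$$
which is exactly the multiplicative F{\o}lner condition for the given $n \in \bbQ_+$.

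There is no real obstacle here: the only point to get right is choosing the direction of the change of variable (multiplying by $b$ rather than dividing) so that one stays inside $\bbN$ and uses each of the hypotheses $A_N$ and $B_N$ exactly once; after that the estimate is a one-line inclusion--exclusion. The representation $n=a/b$ is not unique, but the statement to be proved depends only on $n$, so no care is needed in that choice.
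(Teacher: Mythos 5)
Your proof is correct and is essentially the paper's argument: both reduce the rational case $n=a/b$ to the integer cases $a$ and $b$ by relating $\{k\in\sigma_N:(a/b)k\in\sigma_N\}$ to the set $\{j\in\sigma_N: ja\in\sigma_N,\ jb\in\sigma_N\}$ (your $A_N\cap B_N$) and then applying the same inclusion--exclusion bound $\#(A_N\cap B_N)\ge \#A_N+\#B_N-\#\sigma_N$. The only cosmetic difference is that you establish the containment via the injection $k\mapsto bk$, which lets you skip the coprimality/divisibility step the paper uses when substituting $k=bj$.
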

\begin{proof}
Let $n=a/b$, where $a,b\in\bbN$ are coprime. 
If $nk\in\sigma_N$, then $b$ divides $k$, and so, denoting $k=bj$ and using
counting arguments, we get
\begin{align*}
\#\{k\in\sigma_N: ak/b\in\sigma_N\}
&=
\#\{j\in\bbN: ja\in \sigma_N, jb\in \sigma_N\}
\\
&\geq
\#\{j\in\sigma_N: ja\in \sigma_N, jb\in \sigma_N\}
\\
&=
\#\{j\in\sigma_N: ja\in \sigma_N\}
-
\#\{j\in\sigma_N: ja\in \sigma_N, jb\notin \sigma_N\}
\\
&\geq
\#\{j\in\sigma_N: ja\in \sigma_N\}
-
\#\{j\in\sigma_N: jb\notin \sigma_N\}
\\
&=
\#\{j\in\sigma_N: ja\in \sigma_N\}
-
\#\sigma_N
+
\#\{j\in\sigma_N: jb\in \sigma_N\}.
\end{align*}
Dividing by $\#\sigma_N$ and using  condition \eqref{a5a}  for $n=a$ and $n=b$, 
we obtain
$$
\liminf_{N\to\infty}\frac{\#\{k\in\sigma_N: ak/b\in\sigma_N\}}{\#\sigma_N}\geq1,
$$
which implies \eqref{a5a} for $n=a/b$. 
\end{proof}

For $\sigma_N\subset\bbN$, we consider $\ell^2(\sigma_N)$ as a subspace of $\ell^2(\bbQ_+)$; 
let $\pi_N$ be the orthogonal projection in $\ell^2(\bbQ_+)$ with the range $\ell^2(\sigma_N)$. 
%%%%%%%%%%%%%%%%%%%
\begin{lemma}\label{lma.b2}
%%%%%%%%%%%%%%%%%%%
Let $\bsymb\in L^\infty(\bbT^\infty)$, let $\{\sigma_N\}_{N=1}^\infty$ 
be a multiplicative F{\o}lner sequence, and 
let $\pi_N$ be as above. 
Then 
\[
\frac1{\#{\sigma_N}}\norm{\pi_N \bL(\bsymb)(I-\pi_N)}_{\Sch_2}^2\to0,\quad N\to\infty.
\label{b5}
\]
\end{lemma}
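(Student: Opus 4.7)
The plan is to expand the Hilbert--Schmidt norm entrywise, reindex by the ``shift'' $r=q/s$, and then pass to the limit term by term using the extended F{\o}lner condition of Lemma~\ref{lma.b2a}, with the $L^2$-summability of $\wh\bsymb$ providing a dominating function for Lebesgue's theorem.

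First I would write, directly from the definition of $\bL(\bsymb)$ and of the Hilbert--Schmidt norm,
\[
\norm{\pi_N \bL(\bsymb)(I-\pi_N)}_{\Sch_2}^2
=
\sum_{q\in\sigma_N}\sum_{s\in\bbQ_+\setminus \sigma_N} \abs{\wh\bsymb(q/s)}^2.
\]
Then I would reindex by $r=q/s\in\bbQ_+$ (so $s=q/r$) and group equal values of $\wh\bsymb$:
\[
\norm{\pi_N \bL(\bsymb)(I-\pi_N)}_{\Sch_2}^2
=
\sum_{r\in\bbQ_+} \abs{\wh\bsymb(r)}^2 \cdot \#\bigl\{q\in\sigma_N: q/r\notin\sigma_N\bigr\}.
\]
Here I am using that $s=q/r$ ranges over all of $\bbQ_+$ as $q\in\sigma_N$ and $r\in\bbQ_+$ vary, and that the condition $s\notin\sigma_N$ translates to $q/r\notin\sigma_N$ (which includes the case $q/r\notin\bbN$ automatically, since $\sigma_N\subset\bbN$).

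Next I would divide by $\#\sigma_N$ and observe
\[
\frac{\#\{q\in\sigma_N: q/r\notin\sigma_N\}}{\#\sigma_N}
=
1-\frac{\#\{q\in\sigma_N: q/r\in\sigma_N\}}{\#\sigma_N}.
\]
Writing $1/r=a/b$ with $a,b\in\bbN$ coprime, the numerator on the right equals $\#\{q\in\sigma_N:(a/b)q\in\sigma_N\}$, so by the multiplicative F{\o}lner condition extended to $\bbQ_+$ (Lemma~\ref{lma.b2a}), this ratio tends to $1$ for each fixed $r\in\bbQ_+$. Consequently, for every fixed $r$,
\[
\frac{\#\{q\in\sigma_N: q/r\notin\sigma_N\}}{\#\sigma_N}\xrightarrow{N\to\infty} 0.
\]

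Finally, since the ratio above is bounded by $1$ uniformly in $N$ and $r$, and since $\bsymb\in L^\infty(\bbT^\infty)\subset L^2(\bbT^\infty)$ gives
\[
\sum_{r\in\bbQ_+}\abs{\wh\bsymb(r)}^2 = \norm{\bsymb}_{L^2(\bbT^\infty)}^2 <\infty,
\]
the dominated convergence theorem applied to the counting measure on $\bbQ_+$ yields
\[
\frac{1}{\#\sigma_N}\norm{\pi_N \bL(\bsymb)(I-\pi_N)}_{\Sch_2}^2
=
\sum_{r\in\bbQ_+}\abs{\wh\bsymb(r)}^2\cdot \frac{\#\{q\in\sigma_N: q/r\notin\sigma_N\}}{\#\sigma_N}\longrightarrow 0,
\]
as required. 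The only subtle point is the passage from integer shifts in \eqref{a5a} to arbitrary rational shifts needed here; this is precisely what Lemma~\ref{lma.b2a} delivers, so the argument is otherwise a routine dominated convergence.
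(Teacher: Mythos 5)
Your proof is correct and follows essentially the same route as the paper's: the entrywise expansion of the Hilbert--Schmidt norm, reindexing by $r=q/s$, rewriting the counting ratio as $1$ minus the F{\o}lner ratio, invoking Lemma~\ref{lma.b2a} for rational shifts, and concluding by dominated convergence with the summable dominant $\abs{\wh\bsymb(r)}^2$. No changes are needed.
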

\begin{proof}
We have 
\begin{align}
\frac1{\#{\sigma_N}}\norm{\pi_N \bL(\bsymb)(I-\pi_N)}_{\Sch_2}^2
&=
\frac1{\#{\sigma_N}}\sum_{n\in\sigma_N}\sum_{q\in\bbQ_+\setminus \sigma_N}\abs{\wh\bsymb(n/q)}^2
\notag
\\
&=
\frac1{\#{\sigma_N}}\sum_{r\in\bbQ_+}\abs{\wh\bsymb(r)}^2
\#{\{n\in\sigma_N: n/r\in\bbQ_+\setminus\sigma_N\}}
\notag
\\
&=
\sum_{r\in\bbQ_+}\abs{\wh\bsymb(r)}^2
\biggl(1-\frac{\#{\{n\in\sigma_N: n/r\in\sigma_N\}}}{\#{\sigma_N}} \biggr).
\label{b5a}
\end{align}
We have $\bsymb\in L^\infty(\bbT^\infty)\subset L^2(\bbT^\infty)$, and therefore
$\sum_{r\in\bbQ_+}\abs{\wh \bsymb(r)}^2<\infty$. 
By Lemma~\ref{lma.b2a}, the term in brackets in the 
r.h.s. of \eqref{b5a} converges to zero as $N\to\infty$ for all $r\in\bbQ_+$. 
Now \eqref{b5} follows by dominated convergence. 
\end{proof}

Below we will use Lemma~\ref{lma.b2} in combination with 
the following simple estimate
(which can be seen as an elementary version of the general theorem of \cite{LaSa}).

%%%%%%%%%%%%%%%
\begin{proposition}\label{lma.b3}
%%%%%%%%%%%%%%%
Let $L$ be a bounded self-adjoint operator in a Hilbert space, and let $\pi$ be an orthogonal
projection in the same space such that $\pi L(I-\pi)\in\Sch_2$. 
Then for any $n\geq2$ one has the trace norm estimate
\[
\norm{\pi L^n\pi -(\pi L\pi)^n}_{\Sch_1}
\leq 
\frac{n(n-1)}{2}\norm{L}^{n-2}\norm{\pi L(1-\pi)}_{\Sch_2}^2. 
\label{b6}
\]
\end{proposition}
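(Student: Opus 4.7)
The plan is to decompose $\pi L^n \pi - (\pi L \pi)^n$ into a double-telescoped sum in which every surviving term carries two off-diagonal factors of the form $\pi L (I-\pi)$ or $(I-\pi) L \pi$, and then to estimate each such term as a product of two Hilbert--Schmidt operators. Throughout I write $\pi' = I - \pi$, $A = \pi L \pi$, $S = \pi L \pi'$, so that $S^* = \pi' L \pi$ by self-adjointness of $L$, and $\norm{S}_{\Sch_2} = \norm{\pi L (I-\pi)}_{\Sch_2}$. The elementary identities $\pi \pi' = 0$, $A\pi = A$, $\pi' A = 0$, $S \pi = 0$, and $S A = 0$ will be invoked repeatedly.

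The first step is to establish the ``left'' telescoping identity
\[
\pi L^n \pi - A^n = \sum_{j=0}^{n-2} A^j S\, L^{n-j-1}\pi
\]
by induction on $n$, using the splitting $\pi L = A + S$ and the identity $A \cdot (\pi L^m \pi) = A L^m \pi$ (a consequence of $A\pi = A$). The second step is an analogous ``right'' expansion
\[
L^m \pi = A^m + \sum_{k=0}^{m-1} L^{m-1-k} S^* A^k,
\]
also proved by induction, using $L\pi = A + S^*$ and the derived identity $L A = A^2 + S^* A$. Substituting the second expansion into the first with $m = n-j-1$, and observing that $S A^{m} = 0$, produces the triple sum
\[
\pi L^n \pi - A^n = \sum_{\substack{j,k,l \geq 0 \\ j+k+l = n-2}} A^j S\, L^l\, S^* A^k.
\]

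Each term now admits the factorisation $(A^j S)\cdot (L^l S^* A^k)$ into two Hilbert--Schmidt operators, with $\norm{A^j S}_{\Sch_2} \leq \norm{L}^j \norm{S}_{\Sch_2}$ and $\norm{L^l S^* A^k}_{\Sch_2} \leq \norm{L}^{l+k}\norm{S}_{\Sch_2}$. Applying $\norm{XY}_{\Sch_1} \leq \norm{X}_{\Sch_2}\norm{Y}_{\Sch_2}$ bounds each term in the trace norm by $\norm{L}^{n-2}\norm{S}_{\Sch_2}^2$, and the number of non-negative integer triples $(j,k,l)$ with $j+k+l = n-2$ equals $\binom{n}{2} = \tfrac{n(n-1)}{2}$, yielding exactly the claimed constant.

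The main obstacle, in my view, is recognising the necessity of the second telescoping. After only the first step every summand still contains a lone off-diagonal factor $S$; such a term is in general only Hilbert--Schmidt, not trace class, so the triangle inequality in $\Sch_1$ cannot be invoked. Only after a second telescoping from the opposite side --- producing both an $S$ on the left and an $S^*$ on the right of each term --- does the factorisation into two Hilbert--Schmidt pieces become available and close the estimate.
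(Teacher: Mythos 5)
Your proof is correct. The two telescoping identities are easily verified by induction (the base cases $n=1$ and $m=1$ are trivial, and the induction steps use exactly the identities you cite, namely $\pi L = A+S$, $A=A\pi$, $L\pi = A+S^*$ and $LA = A^2 + S^*A$); the triple-sum decomposition follows because $SA=0$ kills the leading $A^{n-j-1}$; and the count $\binom{n}{2}$ of solutions to $j+k+l=n-2$ with nonnegative integers gives the exact constant. Your remark on why a single telescoping is insufficient is also the right diagnosis.

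The paper's route is different in form though not in spirit. It first proves the Hilbert--Schmidt bound $\norm{\pi L^n \pi^\perp}_{\Sch_2}\le n\norm{L}^{n-1}\norm{\pi L\pi^\perp}_{\Sch_2}$ by induction, and then runs a second induction directly on the trace-norm estimate \eqref{b6}, using the identity
\[
\pi L^{n+1}\pi -(\pi L\pi )^{n+1}
=
\bigl(\pi L^n \pi -(\pi L\pi )^n\bigr)(\pi L\pi )
+
(\pi L^n\pi^\perp)(\pi^\perp L\pi)
\]
to peel off one trace-class error term $(\pi L^n\pi^\perp)(\pi^\perp L\pi)$ at each stage; the constant $\tfrac{n(n-1)}{2}$ then accumulates as $1+2+\cdots+(n-1)$. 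Your argument replaces this recursion by an explicit closed-form expansion into $\binom{n}{2}$ terms, each individually factored into two Hilbert--Schmidt pieces. The advantage of your version is that it makes the combinatorial origin of the constant completely transparent and furnishes an exact operator identity, not merely an inequality chain; the paper's version is shorter to write and defers the term-counting to a standard arithmetic-series sum inside the induction. Both are elementary and give the same sharp constant.
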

\begin{proof}
Denote $\pi^\perp=I-\pi$. 
First by induction in $n\geq1$, one easily proves the estimate
\[
\norm{\pi L^n\pi^\perp}_{\Sch_2}\leq n\norm{L}^{n-1}\norm{\pi L\pi^\perp}_{\Sch_2}.
\label{b1}
\]
Using this, it is easy to prove \eqref{b6} by induction in $n\geq2$. 
The key step is to write
$$
\pi L^{n+1}\pi -(\pi L\pi )^{n+1}
=
(\pi L^n \pi -(\pi L\pi )^n)(\pi L\pi )
+
(\pi L^n\pi^\perp)(\pi^\perp L\pi)
$$
and to estimate the first term by using the induction hypothesis, 
$$
\norm{(\pi L^n \pi -(\pi L\pi )^n)(\pi L\pi )}_{\Sch_1}
\leq 
\norm{L}\norm{\pi L^n \pi -(\pi L\pi )^n}_{\Sch_1}
\leq 
\tfrac{n(n-1)}{2}\norm{L}^{n-1}\norm{\pi L\pi^\perp}_{\Sch_2}^2
$$
and the second term using \eqref{b1}, 
$$
\norm{(\pi L^n\pi^\perp)(\pi^\perp L\pi)}_{\Sch_1}
\leq 
\norm{\pi L^n\pi^\perp}_{\Sch_2} \norm{\pi L\pi^\perp}_{\Sch_2} 
\leq
n\norm{L}^{n-1}\norm{\pi L\pi^\perp}_{\Sch_2}^2.
$$
Now it remains to combine this and notice that $\tfrac{n(n-1)}{2}+n=\tfrac{(n+1)n}{2}$.
\end{proof}

%%%%%%%%%%%%%%%%%%%%%%%%%%%%%%%%%%%%%%%%%
\subsection{Proof of Theorem~\ref{thm.a2}}\label{sec.b3}
%%%%%%%%%%%%%%%%%%%%%%%%%%%%%%%%%%%%%%%%%

The proof proceeds in two steps. 

\emph{1) Let $f$ be a polynomial.}
Clearly, the theorem holds if $f$ is a constant. Thus, subtracting a constant, 
we can always reduce the problem to the case $f(0)=0$. 
Now our aim is to apply Proposition~\ref{lma.b3} with $L=\bL(\bsymb)$ and 
$\pi=\pi_N$. 

Consider the term $\pi_N f(\bL(\bsymb)) \pi_N$. 
By Lemma~\ref{lma.b1}(ii), we have
\[
f(\bL(\bsymb))=\bL(f\circ \bsymb).
\label{b7}
\]
Further, it is clear that for any $\bsymb\in L^\infty(\bbT^\infty)$, we have
$$
\Tr (\pi_N \bL(\bsymb)\pi_N)
=
(\#{\sigma_N})\wh\bsymb(1)
=
(\#{\sigma_N})\int_{\bbT^\infty}\bsymb(z)d\bm(z).
$$
Let us apply this with $f\circ \bsymb$ in place of $\bsymb$: 
$$
\Tr (\pi_N \bL(f\circ \bsymb)\pi_N)
=
(\#{\sigma_N})
\int_{\bbT^\infty}f(\bsymb(z))d\bm(z).
$$
Combining with \eqref{b7}, we obtain
\[
\Tr (\pi_N f(\bL(\bsymb))\pi_N)
=
(\#{\sigma_N})
\int_{\bbT^\infty}f(\bsymb(z))d\bm(z).
\label{b8}
\]

Consider the term $f(\pi_N \bL(\bsymb)\pi_N)$. 
It is clear that
$$
\pi_N \bL(\bsymb) \pi_N=\bT_\sigma(\bsymb)\oplus 0
$$
with respect to the orthogonal decomposition $\ell^2(\bbQ_+)=\ell^2(\sigma_N)\oplus \ell^2(\bbQ_+\setminus \sigma_N)$. 
From here, using the condition $f(0)=0$, we obtain 
$$
\Tr f(\pi_N \bL(\bsymb)\pi_N)=\Tr f(\bT_{\sigma_N}(\symb)).
$$
Combining this with \eqref{b8} and applying Proposition~\ref{lma.b3}, we get
$$
\frac1{\#{\sigma_N}}
\Abs{\Tr f(\bT_{\sigma_N}(\bsymb))
-
\int_{\bbT^\infty}f(\bsymb(z))d\bm(z)}
\leq
C(f)
\frac1{\#{\sigma_N}}
\norm{\pi_N \bL(\bsymb)(I-\pi_N)}_{\Sch_2}^2. 
$$
Finally, by  Lemma~\ref{lma.b2} the right hand side here tends to zero as $N\to\infty$.  
This proves Theorem~\ref{thm.a2} for polynomial $f$. 

\emph{2) The case of a general $f$.} 
By considering the real and imaginary parts of $f$ separately, we reduce the problem 
to the case when $f$ is real valued. Next, 
we use a standard approximation argument. 
By a variational argument, the spectra of $\bT(\bsymb)$ and $\bT_{\sigma_N}(\bsymb)$ are
contained in the interval $[\ess\inf \bsymb,\ess\sup\bsymb]$. 
Now let $f_+$ and $f_-$ be polynomials with real coefficients such that 
$$
f_-(t)\leq f(t)\leq f_+(t),\quad t\in[\ess\inf \bsymb,\ess\sup\bsymb], 
$$
and 
\[
0\leq f_+(t)-f_-(t)\leq \eps, \quad t\in[\ess\inf \bsymb,\ess\sup\bsymb].
\label{b8a}
\]
Then 
$$
\Tr f_-(\bT_{\sigma_N}(\bsymb))
\leq 
\Tr f(\bT_{\sigma_N}(\bsymb))
\leq
\Tr f_+(\bT_{\sigma_N}(\bsymb))
$$
and so, by the previous step of the proof,
$$
\limsup_{N\to\infty}\frac1{\#{\sigma_N}}\Tr f(\bT_{\sigma_N}(\bsymb))
\leq
\int_{\bbT^\infty} f_+(\bsymb(z))d\bm(z),
$$
and similarly 
$$
\liminf_{N\to\infty}\frac1{\#{\sigma_N}}\Tr f(\bT_{\sigma_N}(\bsymb))
\geq
\int_{\bbT^\infty} f_-(\bsymb(z))d\bm(z)\, .
$$
On the other hand, by \eqref{b8a}, 
$$
\int_{\bbT^\infty}\bigl(f_+(\bsymb(z))-f_-(\bsymb(z))\bigr)d\bm(z)\leq \eps. 
$$
Since $\eps>0$ is arbitrary, it follows that 
\begin{multline*}
\limsup_{N\to\infty}\frac1{\#{\sigma_N}}\Tr f(\bT_{\sigma_N}(\bsymb))
=
\liminf_{N\to\infty}\frac1{\#{\sigma_N}}\Tr f(\bT_{\sigma_N}(\bsymb))
=
\int_{\bbT^\infty} f(\bsymb(z))d\bm(z),
\end{multline*}
as required. 
The proof of Theorem~\ref{thm.a2} is complete. 

\subsection{Proof of Theorem~\ref{thm.a3}}

The idea is to replace $\bT_{\sigma_N}(\bsymb)^*\bT_{\sigma_N}(\bsymb)$ 
by $\bT_{\sigma_N}(\abs{\bsymb}^2)$ in \eqref{a8} 
and thereby to reduce the problem to Theorem~\ref{thm.a2}. 

\emph{1) Let $f$ be a polynomial.}
First we need an auxiliary estimate. 
Let $X$ and $Y$ be bounded operators in a Hilbert space 
with operator norms satisfying $\norm{X}\leq B$, $\norm{Y}\leq B$
for some $B>0$. 
For any $m\in\bbN$, as in \eqref{a7}, we have
$$
\norm{X^m-Y^m}_{\Sch_1}
\leq
mB^{m-1}
\norm{X-Y}_{\Sch_1}. 
$$
It follows that for any polynomial $f$, 
\[
\norm{f(X)-f(Y)}_{\Sch_1}\leq C(f,B)\norm{X-Y}_{\Sch_1}. 
\label{b9}
\]
(Of course, much more refined estimates of this kind are available, 
but we prefer to use the most elementary tools here.)

Next, we have
$$
\bT_{\sigma_N}(\abs{\bsymb}^2)-\bT_{\sigma_N}(\bsymb)^*\bT_{\sigma_N}(\bsymb)
=
\bigl(\pi_N\bL(\overline{\bsymb}\bsymb)\pi_N
-
\pi_N\bL(\bsymb)^*\pi_N\bL(\bsymb)\pi_N\bigr)|_{\ell^2(\sigma_N)},
$$
and 
using Lemma~\ref{lma.b1},
\begin{multline*}
\pi_N\bL(\overline{\bsymb}\bsymb)\pi_N
-
\pi_N\bL(\bsymb)^*\pi_N\bL(\bsymb)\pi_N
\\
=
\pi_N\bL(\bsymb)^*\bL(\bsymb)\pi_N
-
\pi_N\bL(\bsymb)^*\pi_N \bL(\bsymb)\pi_N
=
\pi_N\bL(\bsymb)^*(I-\pi_N) \bL(\bsymb)\pi_N.
\end{multline*}
It follows that 
$$
\norm{\bT_{\sigma_N}(\abs{\bsymb}^2)-\bT_{\sigma_N}(\bsymb)^*\bT_{\sigma_N}(\bsymb)}_{\Sch_1}
=
\norm{\pi_N \bL(\bsymb)^*(I-\pi_N)}_{\Sch_2}^2. 
$$
Combining this with \eqref{b9}, we get
\begin{multline*}
\frac1{\#{\sigma_N}}
\Abs{\Tr f(\bT_{\sigma_N}(\abs{\bsymb}^2))-\Tr f(\bT_{\sigma_N}(\bsymb)^*\bT_{\sigma_N}(\bsymb))}
\\
\leq
C(f,\norm{\bsymb}_{L^\infty})
\frac1{\#{\sigma_N}}
\norm{\pi_N \bL(\bsymb)^*(I-\pi_N)}_{\Sch_2}^2. 
\end{multline*}
By Lemma~\ref{lma.b2}, the right hand side here goes to zero as $N\to\infty$. 
Thus, the claim follows from 
Theorem~\ref{thm.a2}, with $\abs{\bsymb}^2$ in place of $\bsymb$. 

\emph{2) The case of a general $f$.} Here the proof proceeds by approximation argument, 
in exactly the same way as in Theorem~\ref{thm.a2}.

%%%%%%%%%%%%%%%%%%%%%%%%%%%%%%%%%%%%%%%%%%%%%%%%%%%%%%%%%%%%%%%%%%%%%%%
%%%%%%%%%%%%%%%%%%%%%%%%%%%%%%%%%%%%%%%%%%%%%%%%%%%%%%%%%%%%%%%%%%%%%%%%

\end{document}